\begin{document}
\title{On extension of Green's operator on bounded smooth domains}
\author{ Antti V. V\"ah\"akangas}
\address{Department of Mathematics and Statistics,
P.O. Box 68 (Gustaf H\"allstr\"omin katu 2b),
FI-00014 University of Helsinki, Finland}
\email{antti.vahakangas@helsinki.fi}
\subjclass[2000]{Primary 35J08; Secondary 35J25, 42B20}
\date{}

\begin{abstract}
We prove a regularity result for Green's functions that are associated to elliptic second
order divergence-type linear PDO's with coefficients in  $C^{1,\alpha}(\overline{\Omega})$. 
Here $\alpha\in (0,1)$ and $\Omega\subset \R^n$ is a bounded $C^{2,\alpha}$ domain in
dimension $n\ge 3$.
The regularity result gives boundary estimates for derivatives up to order $(2+\alpha)$ and, 
by using these estimates, we 
extend the associated Green's operator to a globally
defined singular integral  which of Calder\'on--Zygmund type.
\end{abstract}
\maketitle

\section{Introduction}

\subsection{Background}
Let
$\Omega\subset\R^n$, $n\ge 3$,  be a bounded domain satisfying an exterior ball condition, and
consider the boundary value problem:
\begin{equation}\label{dir}
\begin{cases}
-L u=f\in L^2(\Omega),\\
u\in W^{1,2}_0(\Omega).
\end{cases}
\end{equation}
Here $L$ is a second order partial differential operator, which is of divergence type,
\begin{equation}\label{alla}
Lu=\sum_{i,j=1}^n \partial_i(a^{ij}\partial_j u)
\end{equation}
such that the coefficients $a^{ij}\in L^\infty(\Omega)$ are symmetric
($a^{ij}=a^{ji}$) and $L$ is strictly elliptic: there is a constant $\lambda>0$ such that
for almost every $x\in\Omega$, we have
\begin{equation}\label{elliptic}
\lambda |\xi|^2\le \sum_{i,j=1}^n a^{ij}(x)\xi_i\xi_j\quad \forall\,\xi\in\R^n.
\end{equation}
A prototype  is the Laplacian $L=\Delta=\sum_{i=1}^n \partial_i^2$ for which
the problem \eqref{dir} in case of domains with only Lipschitz boundary
is studied in \cite{jerison}.

It is well known that the solution of \eqref{dir}  can be expressed in terms of a so called Green's operator:
\begin{equation}\label{sol}
u(x)=\mathcal{G}f(x)=\int_\Omega G(x,y)f(y)dy,\quad x\in \Omega.
\end{equation}
The existence of Green's operator is established in the fundamental paper \cite{gunther}. In what follows we recapitulate some results therein.

The following existence result is of importance
to us.

\begin{thm}\label{gext}
There exists a unique function 
$G:\Omega\times \Omega\setminus\{(x,x)\}\to \R$, $G\ge 0$, such that for every
$x\in\Omega$,
\[
G(x,\cdot)\in W^{1,1}_0(\Omega)\cap W^{1,2}(\Omega\setminus B(x,r))
\]
and also, if $\phi\in C^\infty_0(\Omega)$, then
\[
\langle -LG(x,\cdot),\phi\rangle = \sum_{i,j=1}^n\int_\Omega a^{ij}(y)\partial_{y_j} G(x,y) \partial_i \phi(y)dy = \phi(x),\quad x\in\Omega.
\]
The function $G$ is the Green's function for the operator $-L$. It satisfies
$G(x,y)=G(y,x)$ and $G(x,y)\le C|x-y|^{2-n}$ if $x,y\in\Omega$.
\end{thm}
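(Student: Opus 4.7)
My plan is to follow the classical Gr\"uter--Widman / G\"unther construction: build $G(x,\cdot)$ as a weak limit of approximate Green's functions, establish the pointwise bound via Moser iteration, and derive the remaining properties from duality.

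\medskip

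\noindent\textbf{Step 1 (Regularization).} Fix $x\in\Omega$ and $\rho\in(0,\mathrm{dist}(x,\partial\Omega))$. By the Lax--Milgram lemma, applied to the bilinear form $B[u,v]=\sum_{i,j}\int_\Omega a^{ij}\partial_j u\,\partial_i v$ on $W^{1,2}_0(\Omega)$, there is a unique $G_\rho(x,\cdot)\in W^{1,2}_0(\Omega)$ such that $B[G_\rho(x,\cdot),\phi]=\barint_{B(x,\rho)}\phi\,dy$ for every $\phi\in W^{1,2}_0(\Omega)$. By the weak maximum principle $G_\rho(x,\cdot)\ge 0$. The goal is to let $\rho\to 0$.

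\medskip

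\noindent\textbf{Step 2 (Uniform pointwise estimate).} The crucial input is the bound
\[
G_\rho(x,y)\le C\,|x-y|^{2-n}
\]
with $C$ independent of $\rho$. For $y$ with $|x-y|>4\rho$ the function $G_\rho(x,\cdot)$ is $L$-harmonic on $B(y,|x-y|/2)$, so Moser's weak Harnack / local boundedness estimate bounds its supremum by an $L^1$ average. That average is controlled by splitting $\Omega$ into dyadic annuli around $x$, testing the weak formulation against suitable truncations $\min(G_\rho,k)$ (the De Giorgi / Stampacchia truncation trick), and iterating; this is the standard Gr\"uter--Widman argument using only ellipticity and $n\ge 3$ (to absorb the Sobolev embedding). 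The exterior ball condition enters, if at all, only for the boundary regularity $G_\rho(x,\cdot)\in W^{1,1}_0$; the interior estimate does not require it.

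\medskip

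\noindent\textbf{Step 3 (Passage to the limit).} Fix $r>0$ small. Caccioppoli combined with the pointwise bound of Step~2 gives a uniform bound on $\|G_\rho(x,\cdot)\|_{W^{1,2}(\Omega\setminus B(x,r))}$, and a standard computation, again based on the $|x-y|^{2-n}$ bound, yields $\|G_\rho(x,\cdot)\|_{W^{1,q}(\Omega)}\le C(q)$ for every $q<n/(n-1)$. Rellich compactness then produces $G(x,\cdot)$ as a weak limit along some sequence $\rho_k\to 0$, satisfying the claimed membership in $W^{1,1}_0(\Omega)\cap W^{1,2}(\Omega\setminus B(x,r))$ and inheriting both non-negativity and the pointwise bound. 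For $\phi\in C^\infty_0(\Omega)$, testing gives $B[G_{\rho_k}(x,\cdot),\phi]=\barint_{B(x,\rho_k)}\phi\to\phi(x)$, proving the defining identity.

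\medskip

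\noindent\textbf{Step 4 (Symmetry and uniqueness).} For symmetry, use the approximations at both arguments: since $a^{ij}=a^{ji}$ the form $B$ is symmetric, so $B[G_\rho(x,\cdot),G_\sigma(y,\cdot)]=B[G_\sigma(y,\cdot),G_\rho(x,\cdot)]$, i.e.\ $\barint_{B(x,\rho)}G_\sigma(y,\cdot)=\barint_{B(y,\sigma)}G_\rho(x,\cdot)$; letting $\rho,\sigma\to 0$ using continuity of $G_\rho(x,\cdot)$ and $G_\sigma(y,\cdot)$ away from the pole (De Giorgi--Nash) yields $G(y,x)=G(x,y)$. For uniqueness, suppose $\widetilde G$ is a second Green's function. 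Fix $x$; then $w:=G(x,\cdot)-\widetilde G(x,\cdot)\in W^{1,1}_0(\Omega)$ and $B[w,\phi]=0$ for all $\phi\in C^\infty_0(\Omega)$. Using the just-established symmetry of the constructed $G$, test with $\phi=G(z,\cdot)$ approximated in $C^\infty_0$ (Stampacchia duality): one obtains $w(z)=0$ for almost every $z$, and by continuity away from $x$, everywhere.

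\medskip

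\noindent The main technical obstacle is Step~2: ensuring that the constant in the pointwise bound is truly independent of $\rho$ requires careful iteration, since the natural $L^2$ energy estimate alone is far too weak in dimensions $n\ge 3$ to yield the $|x-y|^{2-n}$ decay.
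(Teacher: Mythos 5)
The paper does not prove Theorem~\ref{gext} itself: it simply states ``A proof can be found in \cite{gunther}'' (the Gr\"uter--Widman 1982 paper, misattributed as ``G\"unther'' in the bibliography). Your sketch is a faithful high-level reconstruction of exactly that cited argument---regularization by Lax--Milgram with mollified Dirac data, the Gr\"uter--Widman truncation/iteration proving the $\rho$-independent bound $G_\rho(x,y)\le C|x-y|^{2-n}$, uniform $W^{1,q}$ estimates for $q<n/(n-1)$ and compactness to pass to the limit, and symmetry/uniqueness by duality with the approximants---so it follows essentially the same route as the paper's source; the only soft spot is the final uniqueness step, where you should make explicit that the removability of the singularity of $w=G(x,\cdot)-\widetilde G(x,\cdot)$ at $x$ (so that $w$ is a genuine $W^{1,2}_{\mathrm{loc}}$ solution of $Lw=0$ in all of $\Omega$ with zero boundary values) is what legitimizes the Stampacchia duality test.
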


A proof can be found in \cite{gunther}.
Since $\Omega$ satisfies, in particular, an exterior cone condition, the
Green's function has H\"older regularity even if the coefficients $a^{ij}$ are only
essentially bounded, see \cite[Theorem 1.8, Theorem 1.9]{gunther}.
More regularity is available if one assumes that the coefficients
are Dini continuous,
\begin{equation}\label{dini}
|a^{ij}(x)-a^{ij}(y)|\le \omega(|x-y|),\quad x,y\in\Omega.
\end{equation}
Here $\omega:\R_+\to \R_+$ is supposed to be non-decreasing, $\omega(2t)\le K\omega(t)$ for 
some $K>0$ and all $t>0$
and
\[
\int_0^1 \frac{\omega(t)}{t}dt < \infty.
\]
In case the coefficients belong to the space $C^{\alpha}(\overline{\Omega})\supset C^{1,\alpha}(\overline{\Omega})$, they
satisfy the Dini condition. The following is proven 
in \cite[Theorem 3.3]{gunther}.

\begin{thm}\label{gu}
Assume 
that \eqref{elliptic} holds and  the coefficients $a^{ij}$ are Dini continuous.
Then the Green's function of the corresponding differential operator $-L$ satisfies the following
inequalities for any $x,y\in\Omega$; here $\delta(\cdot)=\mathrm{dist}(\cdot,\partial\Omega)$,
\begin{align*}
&G(x,y)\le C|x-y|^{2-n}\min\bigg\{1,\frac{\delta(x)}{|x-y|},\frac{\delta(x)\delta(y)}{|x-y|^2}\bigg\};\\
&|\nabla_y G(x,y)|\le C|x-y|^{1-n}\min\bigg\{1,\frac{\delta(x)}{|x-y|}\bigg\}.
\end{align*}
Furthermore, the  mixed derivatives satisfy $|\nabla_x\nabla_y G(x,y)|\le C|x-y|^{-n}$, $x,y\in\Omega$.
\end{thm}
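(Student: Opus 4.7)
The plan is to combine three tools: the basic pointwise bound $G(x,y)\le C|x-y|^{2-n}$ from Theorem~\ref{gext}; interior Schauder gradient estimates for $L$-harmonic functions, available because the coefficients are Dini continuous; and a boundary Lipschitz estimate for nonnegative weak solutions of $Lu=0$ vanishing on a smooth portion of $\partial\Omega$. The symmetry $G(x,y)=G(y,x)$ is used to transfer estimates between the two variables.

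\textbf{Size estimate.} Fix $x\in\Omega$ and consider $v(y)=G(x,y)$, which away from the pole is a nonnegative weak solution of $Lv=0$ that vanishes on $\partial\Omega$. If $\delta(y)\ge|x-y|/2$ the inequality $v(y)\le C\delta(y)|x-y|^{1-n}$ is trivial. Otherwise choose $y^*\in\partial\Omega$ with $|y-y^*|=\delta(y)$; on $\Omega\cap B(y^*,|x-y|/4)$ the function $v$ is $L$-harmonic with zero boundary values on $\partial\Omega\cap B(y^*,|x-y|/4)$ and bounded there by $C|x-y|^{2-n}$. The boundary Lipschitz estimate then yields
\[
v(y)\le C\,\delta(y)\,|x-y|^{-1}\sup_{\Omega\cap B(y^*,|x-y|/4)}v\le C\,\delta(y)\,|x-y|^{1-n}.
\]
By symmetry the same bound holds with $\delta(x)$ in place of $\delta(y)$. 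Iterating the boundary argument once more in the $x$-variable, starting from the improved bound $G(x',y)\le C\delta(y)|x'-y|^{1-n}$ valid uniformly for $x'\in B(x,|x-y|/4)\cap\Omega$, produces the joint estimate $G(x,y)\le C\delta(x)\delta(y)|x-y|^{-n}$, completing the first line of the theorem.

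\textbf{Derivative estimates.} The interior gradient estimate for $L$-harmonic $u$ on $B(z,r)\subset\Omega$ reads $|\nabla u(z)|\le Cr^{-1}\sup_{B(z,r)}|u|$. Applied to $v=G(x,\cdot)$ on $B(y,|x-y|/2)$, inserting successively the basic bound and the refined $\delta(x)$-bound just obtained gives both $|\nabla_y G(x,y)|\le C|x-y|^{1-n}$ and, when $\delta(x)\le|x-y|/2$, the refinement $|\nabla_y G(x,y)|\le C\delta(x)|x-y|^{-n}$; these together yield the second line. For the mixed derivative, a difference quotient in $y$ combined with the fact that $G(\cdot,y)$ is $L$-harmonic in $x$ (a consequence of the symmetry and of the defining equation) shows that $\partial_{y_k}G(\cdot,y)$ is also $L$-harmonic in $x$ on $\Omega\setminus\{y\}$; a second application of the interior gradient estimate on $B(x,|x-y|/2)$ then gives $|\nabla_x\partial_{y_k}G(x,y)|\le C|x-y|^{-n}$.

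\textbf{Main obstacle.} The delicate ingredient is the boundary Lipschitz estimate used to extract the $\delta$-factor: one must show that a nonnegative weak solution of $Lu=0$ vanishing on a $C^{2,\alpha}$ portion of $\partial\Omega$ decays at least linearly in the distance to the boundary, with constants depending only on the ellipticity constant $\lambda$ and the Dini modulus $\omega$ of the coefficients. The usual route is to flatten the boundary locally and compare $u$ with an explicit barrier built from an auxiliary $L$-solution vanishing on a hyperplane. Producing the linear (rather than merely Hölder) rate, and ensuring that the comparison constants are scale-invariant so that the argument can be iterated as above, is the main technical work behind Theorem~\ref{gu}.
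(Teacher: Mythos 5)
The paper does not supply its own proof of Theorem~\ref{gu}: it is quoted verbatim with a pointer to G\"unther--Widman \cite[Theorem 3.3]{gunther}. So there is nothing internal to compare against, and your proposal must be judged on its own terms.

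The overall strategy (barrier/boundary Lipschitz estimate to pull out the $\delta$-factor, symmetry $G(x,y)=G(y,x)$ to transfer it, interior Schauder gradient estimates under Dini continuity for the derivative bounds) is indeed the right one and is in the spirit of the cited proof. However, there is a genuine gap in the derivative estimates as you wrote them. You apply the interior gradient inequality on $B(y,|x-y|/2)$ for $\nabla_y G$ and on $B(x,|x-y|/2)$ for $\nabla_x\partial_{y_k}G$, but these balls lie in $\Omega$ only when $\delta(y)\ge |x-y|/2$, resp.\ $\delta(x)\ge |x-y|/2$. The second line of the theorem and the mixed-derivative bound are unweighted in $\delta(y)$, resp.\ $\delta(x)$ and $\delta(y)$, so precisely the near-boundary case is the nontrivial one and your argument omits it. The standard fix is to use the ball $B(y,c\,\delta(y))$ instead, accept the factor $\delta(y)^{-1}$ from the interior estimate, and cancel it against the improved size bound $G(x,y')\le C\delta(y')\,|x-y'|^{1-n}$ (valid by symmetry from your first-step estimate), and analogously for the mixed derivative using the already-obtained bound $|\nabla_y G(x',y)|\le C\delta(x')|x'-y|^{-n}$ on $B(x,c\,\delta(x))$. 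Without this, the sketch only proves the estimates away from $\partial\Omega$.

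A second, smaller issue: the ``Main obstacle'' paragraph proposes to flatten the boundary, which presupposes $C^{2,\alpha}$ boundary regularity. Theorem~\ref{gu} is stated under the section's standing hypothesis that $\Omega$ is merely a bounded domain satisfying an exterior ball condition (its $C^{2,\alpha}$ upgrade appears only later, in Theorem~\ref{dss}). Under an exterior ball condition one should instead build the comparison function directly from the ball (a radial barrier of the form $c_1|z-z_0|^{2-n}-c_2$, adapted to variable coefficients via the ellipticity bounds), which yields the needed scale-invariant linear decay without any boundary flattening and matches the actual hypotheses of the statement.
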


The following estimate for more regular coefficients is in \cite[Theorem 1.8]{fassihi}.

\begin{thm}\label{ssaa}
Assume 
that \eqref{elliptic} holds and the coefficients $a^{ij}$ belong to  $C^{1,\alpha}(\overline{\Omega})$
for some $\alpha\in (0,1)$. Then
\begin{align*}
|\nabla_x^2\nabla_y G(x,y)|\le C|x-y|^{-n}/\min\{|x-y|,\delta(x)\}
\end{align*}
for every $x,y\in\Omega$.
\end{thm}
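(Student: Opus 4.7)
\emph{Plan.} Fix $y\in\Omega$ and an index $k\in\{1,\dots,n\}$ and set $v(z):=\partial_{y_k}G(z,y)$. The strategy is to recognise $v$ as a classical solution of the \emph{homogeneous} equation $L_zv=0$ in $\Omega\setminus\{y\}$ with zero trace on $\partial\Omega$, and then to bound $|\nabla_z^2v(x)|$ by a scale-invariant second-order Schauder estimate at the correct elliptic scale, treating an interior regime (when $|x-y|$ and $\delta(x)$ are comparable) and a boundary regime (when $\delta(x)\ll|x-y|$) separately. That $L_zv=0$ away from $y$ is obtained by differentiating the identity $-L_zG(z,y)=\delta_y$ in $y$; that $v$ vanishes on $\partial\Omega$ comes from differentiating the boundary vanishing of $G(\cdot,y)$ in $y$, using the symmetry $G(z,y)=G(y,z)$ together with enough regularity of $G$ up to $\partial\Omega$ off the diagonal to justify passage to the limit $z\to\partial\Omega$ under $\partial_{y_k}$.

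\emph{Interior regime.} Suppose $|x-y|\le 10\,\delta(x)$. Then $B:=B(x,|x-y|/10)\subset\Omega\setminus\{y\}$, so the interior $C^{2,\alpha}$-Schauder estimate for $Lv=0$ (valid since the coefficients are in $C^{1,\alpha}\subset C^{\alpha}$) yields
\[
|\nabla^2 v(x)|\le \frac{C}{|x-y|^{2}}\sup_{B}|v|.
\]
By Theorem~\ref{gu}, $|v(z)|=|\nabla_yG(z,y)|\le C|z-y|^{1-n}\le C|x-y|^{1-n}$ throughout $B$, and hence $|\nabla^2 v(x)|\le C|x-y|^{-n-1}$, which is bounded by $C|x-y|^{-n}/\min\{|x-y|,\delta(x)\}$ since the minimum is at most $|x-y|$.

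\emph{Boundary regime.} Suppose $|x-y|>10\,\delta(x)$, so that $\min\{|x-y|,\delta(x)\}=\delta(x)$. Pick $x_0\in\partial\Omega$ with $|x-x_0|=\delta(x)$ and set $R:=4\delta(x)$. Then $x\in B(x_0,R)$, while $|y-x_0|\ge|x-y|-\delta(x)>R$, so $y\notin B(x_0,R)$. Since $\partial\Omega\in C^{2,\alpha}$, the boundary $C^{2,\alpha}$-Schauder estimate applies to $v$ on the half-ball $\Omega\cap B(x_0,R)$, where $Lv=0$ and $v=0$ on $\partial\Omega$, giving
\[
|\nabla^2 v(x)|\le \frac{C}{R^{2}}\sup_{\Omega\cap B(x_0,R)}|v|.
\]
For $z$ in this half-ball one has $|z-y|\ge|x-y|/2$ and $\delta(z)\le R$, so the stronger option in the minimum in Theorem~\ref{gu} yields $|v(z)|\le C\,\delta(z)\,|z-y|^{-n}\le CR\,|x-y|^{-n}$. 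Combining,
\[
|\nabla^2 v(x)|\le \frac{C}{R\,|x-y|^{n}}=\frac{C}{\delta(x)\,|x-y|^{n}},
\]
which is precisely the claim in this regime. Summing over $k$ gives the bound for $|\nabla_x^2\nabla_yG(x,y)|$.

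\emph{Main obstacle.} The one step that is not a routine Schauder application is the preliminary claim that $v=\partial_{y_k}G(\cdot,y)$ is a bona fide classical solution of $Lv=0$ with zero boundary trace on $\partial\Omega$. This reduces, via the symmetry of $G$, to showing that $G$ is jointly $C^1$ in $y$ and $C^{2,\alpha}$ up to $\partial\Omega$ in the other argument away from the diagonal, which must be extracted from standard elliptic regularity for divergence-form operators under the present $C^{1,\alpha}$-coefficient and $C^{2,\alpha}$-boundary hypotheses; once this regularity is in hand, the rest of the argument is a direct application of scale-invariant interior and boundary Schauder estimates combined with the a priori bounds of Theorem~\ref{gu}.
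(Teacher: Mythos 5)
The paper does not prove Theorem~\ref{ssaa}: it cites the result to Fassihi's thesis [Fas98] and uses it only to illustrate why that cruder bound fails to yield the H\"older condition~\eqref{standard}. So there is no internal proof to compare against; but your argument is sound and it mirrors closely the technique the paper \emph{does} use for its own Theorem~\ref{dss} in Lemma~\ref{gest}. There as here one freezes the variable that is not being twice differentiated, views the relevant function as a solution of $Lv=0$, chooses an interior ball or boundary half-ball at scale $\min\{|x-y|,\delta(x)\}$ keeping the singularity outside, and combines the scale-invariant Schauder estimate (Lemma~\ref{boupar} in the interior, Lemma~\ref{elles} at the boundary) with the pointwise majorant from Theorem~\ref{gu}. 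Your radii, the inclusion $B(x,|x-y|/10)\subset\Omega\setminus\{y\}$, the lower bound $|z-y|\ge|x-y|/2$ on the half-ball, and the use of the decaying branch of the minimum in Theorem~\ref{gu} all check out, and the scaling $R^{-2}\cdot R\,|x-y|^{-n}=(\delta(x)\,|x-y|^{n})^{-1}$ gives precisely the stated bound.

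The one substantive gap is exactly the one you flag: showing that $v=\partial_{y_k}G(\cdot,y)$ is a bona fide $C^{2,\alpha}$ solution of $Lv=0$ in $\Omega\setminus\{y\}$, continuous up to $\partial\Omega$ with $v|_{\partial\Omega}=0$. This does not follow from the one-variable regularity~\eqref{impreg} alone, because it involves interchanging a $y$-derivative with $L_z$. A clean way to close it is distributional: for $\phi\in C^\infty_0(\Omega\setminus\{y\})$ one has $\int_\Omega G(z,y)\,L^*\phi(z)\,dz=0$ for every $y$, and the pointwise bounds on $G$ and $\nabla_yG$ in Theorem~\ref{gu} justify differentiating this identity in $y_k$ under the integral sign; hence $v$ is a distributional solution of $Lv=0$ away from $y$, and interior plus boundary Schauder regularity for operators with $C^{1,\alpha}$ coefficients on a $C^{2,\alpha}$ domain upgrade $v$ to $C^{2,\alpha}$ up to $\partial\Omega$, with the boundary vanishing inherited from $G(\cdot,y)|_{\partial\Omega}\equiv0$. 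Alternatively you can prove the quantitative estimate for the Green's functions of mollified coefficients, where joint smoothness off the diagonal is classical, and pass to the limit. Either route should be spelled out rather than left as an aside; once it is, your two-regime Schauder argument is complete.
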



Estimates like above have been used in
establishing weak type $(1,1)$ estimates for operators $\nabla^2 \mathcal{G}$ on
bounded and convex domains, $n\ge 3$.
In case of Laplacian this is an unpublished result due to Dahlberg, Verchota, and Wolff;
a proof can be found in \cite{fromm}. In case of Lipschitz coefficients
similar methods are shown to apply \cite{fassihi}.
The weak type estimate is established by
utilizing theory of singular integrals in $\R^n$ -- first one
extends kernel $\nabla_x^2 G(x,y)$ by zero
and then proves that extended kernel $K=\chi_{\Omega\times\Omega}\nabla_x^2 G$ satisfies
the H\"ormander condition
\begin{equation}\label{hor}
\int_{|x-y|\ge 5|h|}Ê|K(x,y+h)-K(x,y)|dx \le C
\end{equation}
with $C$ independent of $y,h\in\R^n$.

In this paper we study when $\nabla^2 \mathcal{G}$ extends to
a Calder\'on--Zygmund operator \cite{D-J}. These are 
linear operators $T\in\mathcal{L}(L^2(\R^n))$ having a
kernel representation
\[
\langle Tf,g\rangle =\int_{\R^n} \int_{\R^n} K(x,y)f(y)g(x)dydx
\]
if the supports of $f,g\in C^\infty_0\subset\mathcal{S}$ are disjoint.
In this connection it is assumed that $K$ is a so called Calder\'on--Zygmund kernel:
there is $\delta\in (0,1]$ such that
\begin{equation}\label{standard}
\begin{split}
&|K(x,y)|\le C|x-y|^{-n},\quad x,y\in\R^n,\\
&|K(x,y+h)-K(x,y)|\le C|h|^{\delta}|x-y|^{-n-\delta},\quad |h|\le |x-y|/2,
\end{split}
\end{equation}
and the transposed kernel $K^t(x,y)=K(y,x)$ also satisfies  estimates \eqref{standard}.
It is easy to verify that $K$ satisfies the H\"ormander condition \eqref{hor}, so
the operator $T$ is of weak type $(1,1)$ and, also,
bounded on $L^p(\R^n)$ for $p\in (1,\infty)$.

In light of estimates \eqref{standard} it is a prerequisite for the extension of $\nabla^2 \mathcal{G}$
that one can
control the derivatives of $K=\nabla^2_x G(x,y)$ up to order $\le 2+\delta$ and up to the boundary.
To indicate this, assume that $\Omega$ is convex and bounded. We can
estimate the H\"older condition in \eqref{standard} by
using the mean-value theorem and Theorem \ref{ssaa}. Accordingly,  there
is $\xi\in [y,y+h]\subset\Omega$ for which
\[
|K(x,y+h)-K(x,y)|\le |h||\nabla_y K(x,\xi)| \le C|h||x-y|^{-n}/\min\{|x-y|,\delta(x)\}.
\]
The upper bound blows up
when $x$ tends to the boundary. In particular, extension of $K$ to a 
Calder\'on--Zygmund kernel is not possible by using these estimates.

Boundary estimates for higher order derivatives of  Green's functions, associated to uniformly elliptic operators of order $2m$
on bounded domains, are derived in \cite{krasovskii}
and later refined in \cite{sweers}. 
Therein, if $m=1$ (as in our case) and dimension $n\ge 3$, the coefficients of $L$ should belong to the space
$C^{5}(\overline{\Omega})$ and  $\partial\Omega$ to the  class $C^7$.

\subsection{Main results}
First we establish (order $2+\alpha$) boundary regularity estimates for Green's functions under reasonable assumptions.
Using these we then extend the corresponding Green's operator
to a Calder\'on--Zygmund operator.

Througout this section we will assume that
 $\Omega\subset \R^n$, $n\ge 3$, 
is a bounded $C^{2,\alpha}$ domain, $\alpha\in (0,1)$. In particular, this domain  satisfies
an exterior ball condition.
We will also assume 
that the coefficients
$a^{ij}$ are $C^{1,\alpha}(\overline{\Omega})$ regular, and we will
denote \[\delta(\cdot)=\mathrm{dist}(\cdot,\partial\Omega).\]

Here is our boundary regularity result for derivatives up to order $2+\alpha$:

\begin{thm}\label{dss}
Assume 
that  $\Omega\subset \R^n$
and the coefficients $a^{ij}$ are as above.
Then, if $\beta\in\N_0^n$ satisfies $|\beta|\le 2$, we have
\begin{equation}\label{tapa}
\begin{split}
|\partial^\beta_y G(x,y)|\le C|x-y|^{2-n-|\beta|}\min\bigg\{1,\frac{\delta(x)}{|x-y|}\bigg\},\quad x,y\in\Omega.
\end{split}
\end{equation}
Furthermore, if $|\beta|=2$, then
\begin{equation}\label{hol}
|\partial_y^{\beta} G(x,y+h)-\partial_y^\beta G(x,y)|\le C|h|^\alpha |x-y|^{-n-\alpha}\min\bigg\{1,\frac{\delta(x)}{|x-y|}\bigg\}
\end{equation}
for every $x,y,y+h\in\Omega$ satisfying $|h|\le |x-y|/2$. 
\end{thm}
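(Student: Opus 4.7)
The $|\beta|\le 1$ cases of \eqref{tapa} are already contained in Theorem~\ref{gu}, so the task is to prove \eqref{tapa} for $|\beta|=2$ and the Hölder bound \eqref{hol}. The key observation is that by symmetry of $a^{ij}$ the function $w := G(x,\cdot)$ is, for fixed $x\in\Omega$, a weak solution of $Lw = 0$ in $\Omega\setminus\{x\}$ that vanishes on $\partial\Omega$. Theorem~\ref{gu} already gives a good sup-bound on $w$; the plan is to bootstrap this bound to a $C^{2,\alpha}$ bound via localized Schauder theory at a scale $\sim |x-y|$.

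Fix $y\in\Omega\setminus\{x\}$ and set $r:=|x-y|/16$. We distinguish two cases. \emph{Interior case} $\delta(y)\ge 4r$: the ball $B(y,4r)\subset\Omega$ is disjoint from $x$, so we rescale $\tilde w(z):=w(y+4rz)$ on $B(0,1)$, where $\tilde L\tilde w=0$ with rescaled coefficients $\tilde a^{ij}(z)=a^{ij}(y+4rz)$ of uniformly bounded $C^\alpha$-norm and with the same ellipticity constant. The interior $C^{2,\alpha}$ Schauder estimate (e.g.\ Gilbarg--Trudinger, Theorem~6.2) yields $\|\tilde w\|_{C^{2,\alpha}(B(0,1/2))} \le C\|\tilde w\|_{L^\infty(B(0,1))}$. \emph{Boundary case} $\delta(y)<4r$: let $y_0\in\partial\Omega$ be a nearest boundary point to $y$, so that $|x-y_0|\ge 12r$. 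Flatten $B(y_0,8r)\cap\partial\Omega$ via a $C^{2,\alpha}$ chart, rescale to unit size, and apply the boundary $C^{2,\alpha}$ Schauder estimate (e.g.\ Gilbarg--Trudinger, Theorem~6.6) to the pullback of $w$, which vanishes on the flat portion of the boundary; this yields the same type of inequality on a fixed reference domain.

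In either case, undoing the rescaling and using Theorem~\ref{gu} together with $|x-z|\sim|x-y|$ on the relevant region to control the right-hand side yields
\[
r^{2}\|\partial^\beta w\|_{L^\infty(D)}+r^{2+\alpha}[\partial^\beta w]_{C^\alpha(D)}\le C|x-y|^{2-n}\min\{1,\delta(x)/|x-y|\},\quad |\beta|=2,
\]
on a neighborhood $D$ of $y$ of size $\sim r$. This immediately gives \eqref{tapa} for $|\beta|=2$, and gives \eqref{hol} whenever $|h|\le r$; the remaining range $r<|h|\le|x-y|/2$ is handled by the triangle inequality and \eqref{tapa} applied at $y$ and $y+h$ separately, absorbing the resulting $|x-y|^{-n}$ into $|h|^\alpha|x-y|^{-n-\alpha}$ via $|x-y|\le 16|h|$.

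\textbf{Main obstacle.} The principal difficulty is uniformity of the Schauder constant in the boundary case: one must verify that after rescaling to unit size the $C^{2,\alpha}$ norms of the flattening map and its inverse, together with the $C^\alpha$ norms of the rescaled coefficients and the ellipticity constant, are all bounded independently of $y_0\in\partial\Omega$ and the scale $r\in(0,\mathrm{diam}\,\Omega]$. This is exactly what the global $C^{2,\alpha}$ regularity of $\partial\Omega$ and the $C^{1,\alpha}(\overline{\Omega})$ hypothesis on $a^{ij}$ furnish; interior estimates alone would not produce the crucial boundary-decay factor $\min\{1,\delta(x)/|x-y|\}$ in the correct form for $y$ near $\partial\Omega$.
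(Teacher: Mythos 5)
Your approach is essentially that of the paper: localize $G(x,\cdot)$ at a scale comparable to $|x-y|$, split into an interior and a boundary regime according to whether $\delta(y)$ is large or small on that scale, and feed the known sup-bound of Theorem~\ref{gu} into weighted Schauder estimates to obtain the $C^{2,\alpha}$ control. This is exactly what the paper's Lemma~\ref{gest} does, via Lemmas~\ref{boupar} and \ref{elles}. Two issues and one genuine difference are worth recording.

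First, to apply the a priori Schauder estimates one needs $G(x,\cdot)$ to be a classical $C^{2,\alpha}$ solution up to $\partial\Omega$ that vanishes there pointwise; a priori it is only a weak $W^{1,2}$ solution. The paper spends a preparatory paragraph bootstrapping this regularity (Theorems 8.8, 9.19 and Lemma 6.18 of \cite{gilbarg}, together with the exterior-sphere property of $C^{2,\alpha}$ domains to get continuity up to the boundary). Your proposal states the equation and the boundary vanishing without this justification, which is a real gap in the setup.

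Second, your boundary chart radius $8r=|x-y|/2$ can exceed the uniform chart scale $\rho$ (which satisfies $\rho<\mathrm{diam}(\Omega)/4$), so a single flattening map with a uniform $C^{2,\alpha}$ bound $K$ need not be available on that ball. The paper caps the scale by choosing $S=\rho|x-y|/\mathrm{diam}(\Omega)\le\rho$; you should likewise replace $r$ by $\min\{r,\rho/8\}$, which is still bounded below by a fixed multiple of $|x-y|$ and changes nothing essential.

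The genuine difference is your treatment of the far-apart regime $r<|h|\le|x-y|/2$: a triangle inequality followed by absorption using $|x-y|<16|h|$, which is elementary and correct. The paper's Lemma~\ref{gest} gives the H\"older bound only for $|h|\le\delta(y)\wedge \rho|x-y|/8\mathrm{diam}(\Omega)$ -- in particular, near the boundary only for $|h|$ at most comparable to $\delta(y)$ -- and then invokes a chaining argument along quasihyperbolic geodesics from \cite{t1dom} to bridge to the full range. Your shortcut works because your boundary Schauder estimate is set up on a ball of radius $\sim|x-y|$ centered at the nearest boundary point $y_0$, so it already reaches $|h|\sim|x-y|$ directly; the paper's Lemma~\ref{gest} could in fact be stated on the larger ball $B(\bar y,S/2)\cap\Omega$ as well, after which the same triangle-inequality finish would suffice there.
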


If $|\beta|<2$, estimate
\eqref{tapa} is  covered in Theorem \ref{gu}.
Under further regularity assumptions for the coefficients and domain, estimates like \eqref{tapa} 
for higher order derivatives are established in
\cite[Theorem 12]{sweers}.

The proof of Theorem \ref{dss} relies on the (known) size estimate
\[
G(x,y)\le C|x-y|^{2-n}\min\bigg\{1,\frac{\delta(x)}{|x-y|}\bigg\},\quad x,y\in\Omega,
\]
and certain
local boundary type Schauder estimates
\cite{gilbarg}.
Latter estimates are available on bounded $C^{2,\alpha}$ smooth domains, and they give
us control to the solutions of $Lu=0$ up to the boundary of the domain
and for derivatives up to order $2+\alpha$.

We then utilize the refined estimates given in Theorem \ref{dss} by showing that
the Green's operator extends to an integral operator in $\R^n$, whose
second order partials are Calder\'on--Zygmund type operators.
For this purpose we will invoke various results about weakly singular integral
operators on domains whose theory is developed in the thesis \cite{t1dom}.
First we invoke following spaces.

\begin{defn}\label{stand}
Let $\emptyset\not=D\subset\R^n$, $n\ge 2$, be a  
domain. Assume that  $m\in\N$ and $0<\delta<1$.
The space of {\em smooth kernels}, denoted by
$\mathcal{K}^{-m}_D(\delta)$, consists 
of complex-valued functions 
$K\in C^m(D\times D\setminus\{(x,x)\})$ satisfying
\begin{itemize}
\item
 size-estimate, given $\alpha,\beta\in\N_0^n$ so that $|\alpha|+|\beta|\le m$,
\[
|\partial^\alpha_x\partial^\beta_y K(x,y)|
\le C_K|x-y|^{m-n-|\alpha|-|\beta|},\quad x,y\in D.
\]
\item
 H\"older-regularity estimate, given  $\alpha,\beta\in\N_0^n$ so that
$|\alpha|+|\beta|=m$,
\begin{align*}
|\partial^\alpha_x\partial^\beta_y K(x,y+h)-\partial^\alpha_x\partial^\beta_y K(x,y)|
\le C_K|h|^{\delta}|x-y|^{-n-\delta}
\end{align*}
if $x,y,y+h\in D$ satisfy $|h|\le|x-y|/2$. We also
assume the same estimate with $h$-difference placed to the $x$-variable and  $x,y,x+h\in D$ satisfying $|h|\le|x-y|/2$.
\end{itemize}
\end{defn}

It is important to observe that the order $m$ derivatives
of kernels in $\mathcal{K}_{\R^n}^{-m}(\delta)$ are Calder\'on--Zygmund kernels,
that is, they satisfy estimates \eqref{standard}.
This follows from the case $D=\R^n$ and $|\alpha|+|\beta|=m$ in Definition \ref{stand}.

We show that Green's function belongs to the space $\mathcal{K}^{-2}_\Omega(\alpha)$. 
Furthermore, 
as a main result of this paper, we will establish the following extension theorem.

\begin{thm}\label{mainth}
Let $\Omega$ and $a^{ij}$ be as quantified above. Then
there exists a smooth kernel $\hat G\in\mathcal{K}^{-2}_{\R^n}(\alpha)$ such that
\[
\hat G|\Omega\times\Omega\setminus\{(x,x)\} = G,
\]
and the operators $\partial^\sigma \hat{\mathcal{G}},\partial^\sigma\hat{\mathcal{G}}^*$, $|\sigma|=2$, are Calder\'on--Zygmund  operators.
Hence they belong to $\mathcal{L}(L^p(\Omega))$ for $1<p<\infty$.
Here $\hat{\mathcal{G}}$ denotes the integral operator  associated with $\hat G$, that is, \[\hat{\mathcal{G}}f(x)=\int_{\R^n} \hat G(x,y)f(y)dy,\quad f\in C^\infty_0(\R^n).\]   
It follows that the differentiated Green's operators
$\partial^\sigma \mathcal{G}$, $|\sigma|=2$, are
restrictions of Calder\'on--Zygmund operators to the domain $\Omega$,
\[
\langle \mathcal{G}f,\partial^\sigma g\rangle = \langle \hat{\mathcal{G}}f,\partial^\sigma g\rangle,\quad f,g\in C^\infty_0(\Omega).
\]
\end{thm}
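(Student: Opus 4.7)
The plan is to execute the argument in three stages: (i) show that $G\in\mathcal{K}^{-2}_\Omega(\alpha)$; (ii) extend $G$ to a smooth kernel $\hat G\in\mathcal{K}^{-2}_{\R^n}(\alpha)$; and (iii) upgrade the second-order differentiations $\partial^\sigma\hat{\mathcal{G}}$ and $\partial^\sigma\hat{\mathcal{G}}^*$, $|\sigma|=2$, to genuine Calder\'on--Zygmund operators on $\R^n$. The restriction identity $\langle\mathcal{G}f,\partial^\sigma g\rangle=\langle\hat{\mathcal{G}}f,\partial^\sigma g\rangle$ for $f,g\in C_0^\infty(\Omega)$ is then immediate from $\hat G|_{\Omega\times\Omega}=G$, and $L^p(\Omega)$-boundedness falls out of the corresponding $L^p(\R^n)$-boundedness by restriction.

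For stage (i), the size bounds $|\partial_x^\mu\partial_y^\nu G(x,y)|\le C|x-y|^{2-n-|\mu|-|\nu|}$ for $|\mu|+|\nu|\le 2$ are assembled from Theorem \ref{dss} (pure $y$-derivatives), the symmetry $G(x,y)=G(y,x)$ (pure $x$-derivatives), and Theorem \ref{gu} for the mixed case $|\mu|=|\nu|=1$. The H\"older condition at order $|\mu|+|\nu|=2$ is \eqref{hol} when $\mu=0$, its symmetric counterpart when $\nu=0$, and in the genuinely mixed case $|\mu|=|\nu|=1$ is to be obtained by a mean-value step in $y$ coupled with the third-order size bound of Theorem \ref{ssaa}, complemented by local interior Schauder estimates for $G(x,\cdot)$ to tame the $1/\delta(\xi)$ factor that appears when $\xi$ approaches $\partial\Omega$.

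Stage (ii) uses the $C^{2,\alpha}$ regularity of $\partial\Omega$: the thesis \cite{t1dom} provides a Whitney--type kernel extension operator from $\mathcal{K}^{-2}_{\Omega}(\alpha)$ into $\mathcal{K}^{-2}_{\R^n}(\alpha)$. Applying this to $G$ produces $\hat G$, which agrees with $G$ on $\Omega\times\Omega\setminus\{(x,x)\}$ and satisfies all Definition \ref{stand} bounds globally. Setting $|\sigma|=2$ there immediately shows that $\partial^\sigma\hat G$ and its transpose $(x,y)\mapsto\partial^\sigma\hat G(y,x)$ are Calder\'on--Zygmund kernels in the sense of \eqref{standard}.

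The principal obstacle is stage (iii): to promote $\partial^\sigma\hat{\mathcal{G}}$ to a Calder\'on--Zygmund operator we still need $L^2(\R^n)$-boundedness in addition to the kernel bounds. The plan is to invoke a $T(1)$--type theorem for smooth-kernel operators developed in \cite{t1dom}. The weak boundedness property required there is built into Definition \ref{stand}; the substantive hypothesis is that $\partial^\sigma\hat{\mathcal{G}}(1)$ and $\partial^\sigma\hat{\mathcal{G}}^*(1)$ lie in $\mathrm{BMO}(\R^n)$. To verify this I plan to exploit the identity $-L\mathcal{G}f=f$ on $\Omega$, which after two differentiations expresses $\partial^\sigma\mathcal{G}f$ in terms of $f$ and lower-order operators involving the $C^{1,\alpha}(\overline\Omega)$ coefficients $a^{ij}$; combined with the controlled behaviour of $\hat G$ outside $\Omega$ furnished by the Whitney extension, this yields the BMO estimate. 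Standard Calder\'on--Zygmund theory then closes the argument.
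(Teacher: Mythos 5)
There are two real gaps here, one in stage (i) and one in stage (iii); the outline of stage (ii) is fine.

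\textbf{Stage (i).} You propose to verify the smooth-kernel H\"older bound for the genuinely mixed case $|\mu|=|\nu|=1$ by a mean-value step using Theorem~\ref{ssaa}, ``taming'' the resulting factor $1/\min\{|x-y|,\delta(x)\}$ with interior Schauder estimates. This is precisely the obstruction the paper singles out in the introduction: the factor $1/\delta(x)$ blows up as $x$ approaches $\partial\Omega$, and no interior estimate for $G(x,\cdot)$ can fix a singularity in the $x$-variable. The paper avoids this entirely by \emph{not} attempting to verify the smooth-kernel bounds directly. Instead it shows that $G$ lies in the \emph{standard} kernel space $\mathrm{K}^{-2}_\Omega(\alpha)$ (a difference-quotient/integral condition, \eqref{k1}--\eqref{k2}, which only requires control of the one-sided differences $\Delta^3_h(G(x,\cdot),B,y)$ and their transposes, and follows from Lemma~\ref{gest} together with the size bound), and then invokes Theorem~\ref{fixss} on the uniform domain $\Omega$ to upgrade $\mathrm{K}^{-2}_\Omega(\alpha)=\mathcal{K}^{-2}_\Omega(\alpha)$. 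It is this equivalence --- not a naive mixed-derivative estimate --- that produces the mixed H\"older bounds.

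\textbf{Stage (iii).} You identify $\hat G$ with the Whitney-type extension from Theorem~\ref{extensionB} and then aim to verify the $T(1)$ condition $\partial^\sigma\hat{\mathcal{G}}(1),\,\partial^\sigma\hat{\mathcal{G}}^*(1)\in\mathrm{BMO}(\R^n)$ directly from the PDE on $\Omega$. Two problems. First, the extension $\tilde G$ furnished by Theorem~\ref{extensionB} is only guaranteed to satisfy the kernel estimates; nothing about the extension procedure makes the associated operator $L^2(\R^n)$-bounded, and in fact one should not expect it to be. Second, and more importantly, the $\hat G$ of Theorem~\ref{mainth} is not the Whitney extension: the paper obtains it from Theorem~\ref{korits}, which \emph{modifies} the kernel outside $\Omega\times\Omega$ using reflected paraproducts and coplumpness precisely so that the resulting operator is globally CZ. The input that Theorem~\ref{korits} needs is not a BMO computation on $\R^n$ but the \emph{known} $L^2(\Omega)$-boundedness of $\partial^\sigma\mathcal{G}$ (a classical elliptic a priori estimate, \cite[Theorem~8.12]{gilbarg}); the theorem's equivalent characterization in terms of $\dot f^{m,2}_\infty(\Omega)$ then does the rest. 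Your plan of computing $\partial^\sigma\hat{\mathcal{G}}(1)$ from $-L\mathcal{G}f=f$ plus ``controlled behaviour outside $\Omega$'' has no mechanism to control the contributions of $\hat G$ on $(\R^n\times\R^n)\setminus(\Omega\times\Omega)$, which is where all the difficulty lies.
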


This paper is organized as follows: in Section \ref{two} we define weakly singular integral operators 
and invoke their basic properties from \cite{t1dom}.
We also prove a sharpening of one of the results in \cite{t1dom}; this sharpening is needed.
Section \ref{three} begins with
Schauder estimates, taken from \cite{gilbarg}, and it ends with proofs
of Theorem \ref{dss} and Theorem \ref{mainth}.

\section{Weakly singular integral operators}\label{two}

Here we recapitulate theory of weakly singular integral operators on domains,
developed in \cite{t1dom}.
First we define bounded $C^{2,\alpha}$ domains, but also uniform
and coplump domains. The latter classes of domains are useful in connecting to 
theory of weakly singular integrals.

\subsection{Classes of domains}
 The
Green's function will be defined on a bounded $C^{2,\alpha}$ domain.
For later purposes we need
to verify that such a domain is both uniform and complump. This type of 
results are well known, so we only indicate the proofs.


\begin{defn}\label{smoothdom}
A bounded domain $\Omega\not=\emptyset$ in $\R^n$ is of class $C^{2,\alpha}$,
$0<\alpha<1$, if at each point $\bar y\in \partial \Omega$ there
is a ball $B=B(\bar y,\rho(\bar y))$ and a diffeomorphism
$\psi:B\to D\subset\R^n$ such that the following conditions (1)--(3) hold:
\begin{itemize}
\item[(1)] $\psi(B\cap \Omega)\subset \R^n_+$;
\item[(2)] $\psi(B\cap \partial \Omega)\subset \partial \R^n_+$;
\item[(3)] $\psi\in C^{2,\alpha}(\bar B), \psi^{-1}\in C^{2,\alpha}(\bar D)$.
\end{itemize}
In (3) we assume that norms
of the diffeomorphisms $\psi$ are uniformly bounded
by some constant $K=K(\Omega)>0$,
\[||\psi||_{C^{k,\alpha'}(\bar B)}+||\psi^{-1}||_{C^{k,\alpha'}(\bar D)}\le K,\quad k+\alpha'\le 2+\alpha.\]
 In particular, the
Lipschitz constants of $\psi$ and 
$\psi^{-1}$ bounded by $K$.
We shall
say that the diffeomorphism $\psi$ straightens the boundary near $\bar y$.
\end{defn}

That a bounded domain is of class $C^{2,\alpha}$
is a local property of its boundary.
If $\Omega$ is such a domain then, due to compactness of $\partial\Omega$, there exists $\rho>0$
and a set $\{\bar y_1,\ldots,\bar y_k\}\subset \partial \Omega$
such that, if
$\bar y\in\partial\Omega$ is a boundary point, then
\[B(\bar y,\rho)\subset \subset B(\bar y_{\ell},\rho(\bar y_\ell))\]
for some $\ell\in \{1,2,\ldots,k\}$. 
In the sequel we assume that $\rho$ satisfies $\rho/\mathrm{diam}(\Omega)<1/4$.

\begin{defn}\label{unif}
A  domain $\Omega\subset \R^n$, $n\ge 2$, is {\em $c$-uniform} for $c\ge 1$ if every pair of distinct points $x,y\in\Omega$
can be joined by a $c$-cigar in $\Omega$, that is,
there exists a continuum $E\subset \Omega$ containing these two points such that
$\mathrm{diam}(E)\le c|x-y|$ and 
\[\min\{|z-x|,|z-y|\}\le c\mathrm{dist}(z,\partial\Omega)\]
if  $z\in E$.
\end{defn}

In \cite[Definition 1.13]{t1dom} we
pose a definition which is based on rectifiable paths, but this is equivalent to the definition given above
\cite{vaisala}.

\begin{defn}\label{plump}
A domain $\Omega\subset \R^n$, $n\ge 2$,
is $c$-{\em coplump}, $c\ge 1$, if for all $x\in \R^n\setminus\Omega$
and $0<r<\mathrm{diam}(\R^n\setminus\Omega)$ there is $z\in \bar{B}(x,r)$ with
$B(z,r/c)\subset \R^n\setminus \Omega$. 
\end{defn}

Uniformity and coplumpness are local properties of the boundary of the domain. 
This is  easily seen for the coplumpness, and we omit the precise
formulation. For uniformity we invoke the following theorem,  
due to J. V\"ais\"al\"a \cite[Theorem 4.1]{vaisala}.

\begin{thm}
Suppose that $\Omega\subset\R^n$ is a bounded domain and 
that $c\ge 1$, $0<r<\mathrm{diam}(\Omega)$. Suppose also
that if $z\in\partial\Omega$, then every pair of points
in $\Omega\cap B(z,r)$ can be joined by a $c$-cigar in $\Omega$.
Then $\Omega$ is $c_1$-uniform with $c_1=40c^2\mathrm{diam}(\Omega)/r$.
\end{thm}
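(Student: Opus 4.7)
The plan is to prove the statement by case analysis on the relative sizes of $|x-y|$, $\delta(x)$, $\delta(y)$, and $r$, reducing to either a direct interior estimate or a concatenation of local cigars supplied by the hypothesis.

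Suppose first that $|x-y|\le \min\{\delta(x),\delta(y)\}$. Then the segment $[x,y]$ lies in $\Omega$ and is itself a $1$-cigar, since any $z\in[x,y]$ satisfies $\min\{|z-x|,|z-y|\}\le |x-y|/2\le \delta(z)$. Hence one may assume, after relabeling, that $\delta(x)<|x-y|$. If in addition $|x-y|\le r/2$, I would pick $\bar z\in\partial\Omega$ with $|x-\bar z|=\delta(x)$; both $x$ and $y$ then lie in $B(\bar z,2|x-y|)\subset B(\bar z,r)$, and the hypothesis directly produces a $c$-cigar joining them.

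The main case is $|x-y|>r/2$. Here I would build a chain $x=x_0,x_1,\ldots,x_N=y$ in $\Omega$ with $|x_i-x_{i+1}|\le r/2$, arranged so that each interior node satisfies $\delta(x_i)\ge r/(4c)$ (i.e.\ lies in the ``thick'' part of $\Omega$); such a chain is produced by covering an arbitrary rectifiable arc in $\Omega$ from $x$ to $y$ by balls of radius $r/2$ and discretizing, with step count $N$ of order $\mathrm{diam}(\Omega)/r$. Each consecutive pair is handled by the two subcases above, yielding a local $c$-cigar $E_i$, and the concatenation $E=\bigcup_i E_i$ is the candidate global arc. Its diameter is at most $N\cdot cr$, which is $\le 2c\,\mathrm{diam}(\Omega)$; since $|x-y|>r/2$ this gives the diameter bound with a constant of order $c\,\mathrm{diam}(\Omega)/r$.

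The most delicate step, and the main obstacle, is to verify the cigar condition $\min\{|z-x|,|z-y|\}\le c_1\delta(z)$ globally on $E$. Within a piece $E_i$ the local hypothesis only controls $\min\{|z-x_i|,|z-x_{i+1}|\}$ relative to $\delta(z)$, so one must argue that either $z$ lies close to an interior node $x_i$ (for which $\delta(x_i)\ge r/(4c)$ forces $\delta(z)$ to be proportional to $r$) or $z$ lies close to one of the genuine endpoints $x$ or $y$, so that $\min\{|z-x|,|z-y|\}$ is automatically small. Tracking the additive error accumulated along the chain (governed by $N$ of order $\mathrm{diam}(\Omega)/r$) together with the multiplicative error from the local cigar constant $c$ is what produces the final $c^2\mathrm{diam}(\Omega)/r$ dependence in $c_1$.
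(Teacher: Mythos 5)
First, a caveat: the paper does not prove this statement at all --- it is quoted verbatim as Theorem~4.1 of V\"ais\"al\"a \cite{vaisala} and used as a black box --- so there is no in-paper argument to compare your attempt against.

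Your first two cases are sound: when $|x-y|\le\min\{\delta(x),\delta(y)\}$ the segment $[x,y]$ is a $1$-cigar, and when (after relabeling) $\delta(x)<|x-y|\le r/2$ the observation that $x,y\in B(\bar z,2|x-y|)\subset B(\bar z,r)$ for $\bar z\in\partial\Omega$ nearest $x$ correctly reduces to the hypothesis. The genuine gap is in the main case $|x-y|>r/2$. You assert a chain $x=x_0,\dots,x_N=y$ with consecutive gaps $\le r/2$, interior nodes $\delta(x_i)\ge r/(4c)$, and --- crucially --- $N$ of order $\mathrm{diam}(\Omega)/r$, and you propose producing it by covering ``an arbitrary rectifiable arc'' from $x$ to $y$ by $r/2$-balls and discretizing. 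That procedure gives $N$ of order $(\text{arc length})/r$, not $\mathrm{diam}(\Omega)/r$; an arbitrary rectifiable arc inside $\Omega$ can be enormously longer than $\mathrm{diam}(\Omega)$. Producing a connecting arc of length $\lesssim\mathrm{diam}(\Omega)$ whose interior stays in the thick set $\{\delta\ge r/(4c)\}$ is essentially the entire content of the theorem (it already encodes $c_1\lesssim\mathrm{diam}(\Omega)/r$), and it genuinely requires the local hypothesis --- without it the set $\{\delta\ge r/(4c)\}$ need not even be connected. So the key estimate $N\lesssim\mathrm{diam}(\Omega)/r$ is asserted rather than established, and the final verification that the concatenated continuum $E=\bigcup_i E_i$ satisfies the global inequality $\min\{|z-x|,|z-y|\}\le c_1\delta(z)$ is likewise only a plan. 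As written this is an outline of the right shape but with the central quantitative step missing.
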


Here is a simple consequence of the locality of all definitions given above 

\begin{thm}\label{smoothun}
Let $\Omega\subset\R^n$, $n\ge 2$, be a 
 bounded $C^{2,\alpha}$ domain. Then
 \begin{itemize}
\item $\Omega$ is $c$-uniform with $c=200c_n^3K^6\mathrm{diam}(\Omega)/\rho$,
where  $c_n$ is any constant such that
$\R^n_+$ is $c_n$-uniform.
\item $\Omega$ is $c$-coplump for $c=8K^4\mathrm{diam}(\Omega)/\rho$.
\end{itemize}
\end{thm}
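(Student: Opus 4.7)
The overall plan is to leverage the local character of both uniformity and coplumpness together with the boundary-straightening diffeomorphisms from Definition~\ref{smoothdom}. For each boundary point $\bar y\in\partial\Omega$, pick the covering point $\bar y_\ell$ with $B(\bar y,\rho)\subset\subset B_\ell:=B(\bar y_\ell,\rho(\bar y_\ell))$ and let $\psi:B_\ell\to D$ be the associated straightening: it is $K$-bi-Lipschitz and sends $B_\ell\cap\Omega$ into $D\cap\R^n_+$ with $B_\ell\cap\partial\Omega$ landing on $\partial\R^n_+$. Since $\R^n_+$ is $c_n$-uniform and its complement is trivially coplump with constant $1$, I will transport these local models to $\Omega$ via $\psi^{-1}$, picking up a few factors of $K$.

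For uniformity I apply V\"ais\"al\"a's theorem quoted just above the statement; it suffices to exhibit, at some scale $r>0$, a local cigar constant $c_0$ that works for all pairs $z_1,z_2\in \Omega\cap B(z,r)$ with $z\in\partial\Omega$. Given such a pair I join $\psi(z_1),\psi(z_2)$ by a $c_n$-cigar $E'\subset\R^n_+$ and pull it back to $E=\psi^{-1}(E')\subset\Omega$. The diameter half reads $\mathrm{diam}(E)\le K\,\mathrm{diam}(E')\le K^2c_n|z_1-z_2|$. For the distance half, for $w\in E'$ with $z=\psi^{-1}(w)$ I estimate $\min_i|z-z_i|\le K\min_i|w-\psi(z_i)|\le Kc_n\,\mathrm{dist}(w,\partial\R^n_+)\le K^2c_n\,\delta(z)$, the last step using that the $\psi$-image of the nearest boundary point of $z$ (which stays in $B_\ell$ once $r$ is small enough) sits on $\partial\R^n_+$. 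Thus $c_0=K^2c_n$, and choosing $r$ a suitable multiple of $\rho/(c_nK^2)$—small enough that $E'\subset D$ and that the nearest boundary projection of every $z\in E$ remains inside $B_\ell$—yields a global constant of the form $40c_0^2\mathrm{diam}(\Omega)/r=200c_n^3K^6\mathrm{diam}(\Omega)/\rho$.

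For coplumpness I split by the scale of $r$. When $r\ge 2\mathrm{diam}(\Omega)$ I push $z\in\bar B(x,r)$ away from a fixed $x_0\in\Omega$, getting an exterior ball of size $\ge r/2$ and hence $c=2$. When $r\le\rho/(4K^2)$ and $\mathrm{dist}(x,\partial\Omega)<r$, letting $\bar y\in\partial\Omega$ be the nearest boundary point to $x$ I set $\tilde z=\psi(\bar y)-(Kr/(K^2+1))e_n$; then $B(\tilde z,Kr/(K^2+1))\subset D\cap\overline{\R^n_-}$ pulls back to contain a ball $B(z,r/(K^2+1))$ inside $\R^n\setminus\Omega$, while $|z-x|\le r$. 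In the intermediate range the small-scale construction at the fixed scale $r_1=\rho/(4K^2)$ supplies a $z_0\in\bar B(x,r)$ with an exterior ball of size $\sim\rho/K^4$; demanding this to exceed $r/c$ for $r<2\mathrm{diam}(\Omega)$ pins $c$ on the order of $K^4\mathrm{diam}(\Omega)/\rho$, and an honest bookkeeping yields the stated $8K^4\mathrm{diam}(\Omega)/\rho$.

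The main obstacle, beyond bookkeeping of constants, is to verify at each step that the half-space cigars and exterior balls actually sit inside $D$ and that their $\psi^{-1}$-images lie in the portion of $B_\ell$ where $\psi$ is defined and where the nearest boundary projection behaves well. This is what forces the local scale $r$ to be shrunk by a factor proportional to $c_nK^2$ in the uniformity argument and to $K^2$ in the coplumpness one, and these shrinkings are precisely what produce the $K^6$ and $K^4$ powers in the final constants.
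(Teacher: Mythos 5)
The paper explicitly omits the proof of Theorem~\ref{smoothun}, merely indicating that it rests on the bi-Lipschitz bound $K$ for the boundary-straightening maps and on the locality of both properties (for uniformity, via V\"ais\"al\"a's theorem stated just above). Your proposal fills in a proof following exactly that plan --- pulling back a half-space cigar / exterior ball through $\psi^{-1}$, estimating the constants by factors of $K$, and then invoking the local-to-global passage --- so the strategy coincides with what the paper has in mind.

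Two concrete bookkeeping issues. First, in the coplumpness argument at small scales you set $\tilde z=\psi(\bar y)-(Kr/(K^2+1))e_n$ with $\bar y$ the nearest boundary point to $x$, and claim $|z-x|\le r$ for $z=\psi^{-1}(\tilde z)$. But $|z-x|\le K\cdot|\tilde z-\psi(\bar y)|+|\bar y -x| = K^2r/(K^2+1)+\delta_x$, which exceeds $r$ as soon as $\delta_x>r/(K^2+1)$; your stated hypothesis is only $\delta_x<r$. You need the complementary trivial case: when $\delta_x\ge r/(K^2+1)$ take $z=x$ itself, so $B(x,r/(K^2+1))\subset\R^n\setminus\Omega$ with local constant $K^2+1$. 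Second, the exact numerical constants do not quite fall out of your estimates: for uniformity, requiring that the nearest $\partial\Omega$-projection of every point of the pulled-back cigar remain inside $B_\ell$ forces $r$ somewhat smaller than $\rho/(5c_nK^2)$ (roughly $\rho/(6c_nK^2)$ with your estimates), and the intermediate-range coplumpness bridging with threshold $r_1=\rho/(4K^2)$ and local constant $K^2+1\le 2K^2$ gives $c\approx 16K^4\mathrm{diam}(\Omega)/\rho$ rather than $8K^4\mathrm{diam}(\Omega)/\rho$. These are inconsequential (the theorem needs \emph{some} explicit constants, and the paper gives no derivation to compare against), but you should either tighten the estimates or accept slightly larger constants rather than claim the stated ones follow from "honest bookkeeping."

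Aside from those two points the argument is sound, and as the paper itself notes, it only uses the bi-Lipschitz bound $K$, so it would apply verbatim to bounded Lipschitz domains.
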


We omit the proof. It relies on the bi-Lipschitz bound $K$ of the diffeomorphisms
straightening the boundary near the boundary points. Hence the result
holds true if we only assume that $\Omega$ is a bounded
Lipschitz domain, that is, the mappings $\psi$ are only assumed
to be bi-Lipschitz.

\subsection{Standard kernels and their regularity}
A so called standard kernel space furnishes an approximation theoretic approach to
smooth kernels if the underlying domain is uniform. This space is defined in terms of integral averages of differences, and its
advantages include that it is -- a priori -- easier to verify that a given kernel is standard
than smooth.

The difference operators $y\mapsto \Delta_h^\ell(f,D,y):\R^n\to \C$ are parametrized by $\ell\in\N$, $h\in\R^n$, and $D\subset\R^n$.
These operate on functions $f:D\to \C$ according to the rule
\[
\Delta_h^{\ell}(f,D,y)=
\begin{cases}
\sum_{k=0}^\ell (-1)^{\ell+k}\binom{\ell}{k} f(y+kh), & \text{ if } 
\{y,y+h,\ldots,y+\ell h\}\subset D,
\\
0,&\text{otherwise}.
\end{cases}
\]
\begin{defn}
Let $\emptyset\not=\Omega\subset \R^n$, $n\ge 2$, be a domain. Let
$m\in\N$  and $0<\delta< 1$. The space of {\em standard kernels}, denoted by $\mathrm{K}^{-m}_\Omega(\delta)$, 
consists of continuous functions $K:\Omega\times \Omega\setminus\{(x,x)\}\to \C$ 
satisfying
\begin{itemize}
\item  kernel size estimate
\begin{equation}\label{k1}
|K(x,y)|\le C_K|x-y|^{m-n},\quad x,y\in\Omega,
\end{equation}
 \item semilocal integral estimate
\begin{equation}\label{k2}
\sup_{|h|\le \mathrm{diam}(B)}
\frac{1}{|B|^{1+(m+\delta)/n}}\int_{B}
|\Delta_h^{m+1}(K(x,\cdot),B,y)|dy \le
C_K|x-y^B|^{-n-\delta},
\end{equation}
if $x\in \Omega$ and $B\subset\subset\Omega$ is a ball, centered at $y^B$ 
so that
$C_K\mathrm{diam}(B)\le|x-y^B|$.
 We also assume the  estimate \eqref{k2} with $K(x,\cdot)$
 replaced by $K(\cdot,x)$. 
\end{itemize}
\end{defn}

Notice that we use balls in the definition of standard kernels instead of cubes, but this difference
compared to the definition given in \cite{t1dom} is not important.
Here is a result stating that standard kernels are smooth if the underlying domain is uniform.

\begin{thm}\label{fixss}
Let $\Omega\subset \R^n$ be a uniform domain
and  $0<m<n$ and $0<\delta<1$. Then the classes of standard and smooth kernels coincide, that is,
\[
\mathrm{K}^{-m}_{\Omega}(\delta) = \mathcal{K}^{-m}_{\Omega}(\delta).
\]
As a consequence, if $K\in\mathrm{K}^{-m}_{\R^n}(\delta)$ and
$\alpha,\beta\in \N_0^n$ satisfy
$|\alpha|+|\beta|=m$, then 
\[\partial^\alpha_x\partial^\beta_y K:\R^n\times \R^n\setminus\{(x,x)\}\to \C\] is
a Calder\'on--Zygmund kernel. 
\end{thm}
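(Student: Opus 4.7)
The plan is to prove two inclusions, $\mathcal{K}^{-m}_\Omega(\delta)\subseteq\mathrm{K}^{-m}_\Omega(\delta)$ and $\mathrm{K}^{-m}_\Omega(\delta)\subseteq\mathcal{K}^{-m}_\Omega(\delta)$. The forward inclusion is essentially Taylor's theorem applied to iterated differences; the reverse inclusion rests on a Campanato-type characterization of Hölder regularity by averaged $(m+1)$-st differences together with cigar-arc chaining, and it is here that uniformity of $\Omega$ is essential.

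For the forward inclusion, fix $K\in\mathcal{K}^{-m}_\Omega(\delta)$ and a ball $B\subset\subset\Omega$ centered at $y^B$ with $C_K\,\mathrm{diam}(B)\le|x-y^B|$. Whenever $\{y,y+h,\ldots,y+(m+1)h\}\subset B$, iterating the mean-value theorem allows us to represent
\[
\Delta_h^{m+1}\bigl(K(x,\cdot),B,y\bigr)=\sum_{|\gamma|=m}h^{\gamma}\bigl(\partial^\gamma_y K(x,\xi_\gamma+h)-\partial^\gamma_y K(x,\xi_\gamma)\bigr)
\]
with intermediate points $\xi_\gamma\in B$. Since $|h|\le\mathrm{diam}(B)$ and the side condition ensures $|h|\le|x-\xi_\gamma|/2$ (after harmlessly enlarging $C_K$), the Hölder estimate in Definition \ref{stand} bounds each bracket by $C|h|^{\delta}|x-y^B|^{-n-\delta}$. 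Combined with $|h|^m\le\mathrm{diam}(B)^m\simeq|B|^{m/n}$ and averaging over $y\in B$, this yields \eqref{k2}.

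For the reverse inclusion, fix $x\in\Omega$ and set $f:=K(x,\cdot)$. For each $y_0\in\Omega\setminus\{x\}$, pick an admissible ball $B=B(y_0,r)$ where $r$ is a small fixed multiple of $\min\{\delta(y_0),|x-y_0|\}$; admissibility means both $\bar B\subset\Omega$ and $C_K\,\mathrm{diam}(B)\le|x-y^B|$. The semilocal estimate \eqref{k2} applied on $B$, combined with the classical Campanato / Marchaud characterization of $C^{m,\delta}(B)$ via averaged $(m+1)$-st differences, gives $f\in C^{m,\delta}(B)$ with the top-order seminorm controlled by $C|x-y^B|^{-n-\delta}$ and, by interpolation with the kernel size estimate \eqref{k1}, corresponding bounds on lower-order seminorms. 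To upgrade these local bounds to the global pointwise estimates of Definition \ref{stand}, given $y,y+h\in\Omega\setminus\{x\}$ with $|h|\le|x-y|/2$, we connect them by a $c$-cigar $E\subset\Omega$; the cigar property $\min\{|z-y|,|z-(y+h)|\}\le c\,\mathrm{dist}(z,\partial\Omega)$ guarantees that $E$ stays a definite distance from both $\partial\Omega$ and $x$, so it can be covered by a number of admissible balls bounded solely in terms of $c$, and summing their local Hölder bounds produces the global bound with the correct weight $|x-y|^{-n-\delta}$. Mixed derivatives are handled by induction: having established bounds on $\partial^\beta_y K$, the symmetry in the definition of $\mathrm{K}^{-m}_\Omega(\delta)$ allows the same argument to be iterated in the $x$-variable, viewing $\partial^\beta_y K$ as a kernel of reduced order $m-|\beta|$.

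The main obstacle is quantitative: carrying the weight $|x-y^B|^{-n-\delta}$ faithfully through the Campanato-to-pointwise passage and through the chain of balls along the cigar, while verifying that every link satisfies the side condition $C_K\,\mathrm{diam}(B)\le|x-y^B|$. The number of links is controlled by $c$ alone thanks to the cigar property, and the side condition reduces to elementary geometry once $r\lesssim\min\{\delta(y_0),|x-y_0|\}/C_K$. Once the equivalence $\mathrm{K}^{-m}_{\R^n}(\delta)=\mathcal{K}^{-m}_{\R^n}(\delta)$ is in hand, the Calderón--Zygmund consequence is immediate, since the $|\alpha|+|\beta|=m$ clauses of Definition \ref{stand} are exactly the two conditions in \eqref{standard}, and the transposed estimate follows from the symmetry hypothesis on $K(\cdot,x)$.
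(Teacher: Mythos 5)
Your plan and the paper's proof diverge substantially in the hard direction $\mathrm{K}^{-m}_\Omega(\delta)\subseteq\mathcal{K}^{-m}_\Omega(\delta)$. The paper imports from \cite{t1dom} an explicit dyadic resolution of unity built from $m$-regular bump functions $\phi_{\sigma,2^{-j}},\phi_{\rho,2^{-j}}$ transported along quasihyperbolic geodesics $\sigma:x_0\curvearrowright y_0$, $\rho=\sigma^{-1}$, which yields a two-sided mollified decomposition $K=\sum_{j\ge\ell}\kappa^{\sigma,\rho}_j$ with $\kappa^{\sigma,\rho}_j\in C^\infty$ and the derivative bound $|\partial^\alpha_x\partial^\beta_y\kappa^{\sigma,\rho}_j|\le C2^{j(|\alpha|+|\beta|-m-\delta)}|x-y|^{-n-\delta}$ (Lemma \ref{apu2}); the size and H\"older estimates then follow by the Weierstrass $M$-test and a split of the $j$-sum at the scale $2^{-j_0}\sim|h|$. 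You instead propose a local Campanato--Marchaud characterization of $C^{m,\delta}$ from the averaged $(m+1)$-st differences, followed by cigar chaining. Your forward inclusion (Taylor/iterated finite differences) is fine, and the local Campanato route for the $y$-regularity of $K(x,\cdot)$ at fixed $x$ is plausible, since \eqref{k2} together with the assumed continuity of $K$ can be fed into the $L^1$-Campanato machinery at every admissible scale below the initial ball.

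The gap is in the mixed derivatives, which is precisely the crux of the theorem. Your Campanato step produces, separately, $C^{m,\delta}$ regularity of $K(x,\cdot)$ (uniformly in $x$) and $C^{m,\delta}$ regularity of $K(\cdot,y)$ (uniformly in $y$). From this you claim to iterate, ``viewing $\partial^\beta_y K$ as a kernel of reduced order $m-|\beta|$.'' But the hypothesis \eqref{k2} on the transpose controls $(m+1)$-st differences of $K(\cdot,y)$ in the $x$-variable, not $(m-|\beta|+1)$-st differences of $\partial^\beta_y K(\cdot,y)$; you have no standard-kernel condition for $\partial^\beta_y K(\cdot,y)$, and there is no direct way to differentiate the integral estimate \eqref{k2} under the $\sup$ and the integral in $y$. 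More fundamentally, separate $C^{m,\delta}$ regularity in each variable plus joint continuity does not immediately yield joint $C^m$ regularity of $K$ on $\Omega\times\Omega\setminus\{(x,x)\}$, let alone the quantitative mixed bounds $|\partial^\alpha_x\partial^\beta_y K(x,y)|\le C|x-y|^{m-n-|\alpha|-|\beta|}$ required by Definition \ref{stand}. This is exactly where the paper's two-sided mollification earns its keep: $\partial^\alpha_x\partial^\beta_y\kappa_j^{\sigma,\rho}$ is obtained by letting the derivatives fall on the two bump factors simultaneously, and the vanishing moments of $\psi_{\sigma,j}$, $\psi_{\rho,j}$ let \eqref{k2} (for $K$ and its transpose) control the result directly, with no separate-to-joint upgrade needed. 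Your proposal needs a genuine additional idea here --- either reproducing the mollified decomposition, or a concrete lemma passing from separate to joint H\"older regularity with sharp exponent. A secondary issue worth noting: the cigar from $y$ to $y+h$ has diameter up to $c|h|$, which can exceed $|x-y|$ when $|h|$ is near $|x-y|/2$ and $c$ is large, so your claim that the cigar ``stays a definite distance from $x$'' requires splitting $h$ into $O(c)$ pieces first.
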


A related result -- where the sharp H\"older exponent is missing -- is proven in \cite[Chapter 4]{t1dom} by using approximation
theoretic approach. In what follows we modify that approach, and thereby
provide a proof for Theorem \ref{fixss}.
Generally speaking, the proof is based on
so called dyadic resolution of unity on uniform domains.
Let us explain what we mean by this:
In case $\Omega=\R^n$, we can fix
one bump function $\phi\in C^\infty_0(\R^n)$ so that
$\int_{\R^n} \phi(x)dx=1$ and define
$\phi_{j}=2^{jn}\phi(2^{j}\cdot)$ for $j\in\Z$. 
This results in the decomposition 
\begin{equation}\label{repro}
f(x) =\int_{\R^n} f(y)\phi_{\ell}(y-x)dy + \sum_{j=\ell+1}^\infty \int_{\R^n} f(y)(\phi_{j}-\phi_{j-1})(y-x)dy,\quad \ell\in\Z,
\end{equation}
assuming, say,  that $f$ is continuous at $x$.

In proper domains the  difficulties
lie in modifying this
construction such that the supports of the bump functions are included in
the domain and the coarseness parameter $\ell$ is independent of 
$\delta(x)$. To indicate 
the difficulties, 
one expects  vanishing moments  from
the difference of two consecutive bump functions in order to induce cancellation. There
are also certain geometric properties that the modification should preserve.


We begin the proof of Theorem \ref{fixss} with invoking a dyadic resolution of a given kernel $K\in\mathrm{K}^{-m}_{\Omega}(\delta)$ in
a uniform domain $\Omega\subset\R^n$ \cite[p. 69]{t1dom}.
Let $x_0,y_0\in \Omega$ be distinct points and
let $\ell=\ell(x_0,y_0)$ be defined by
$2^{-\ell}<|x_0-y_0|/16<2^{-\ell+1}$. Then we let
$\{\phi_{\sigma,2^{-j}}\}_{j\ge \ell},\{\phi_{\rho,2^{-j}}\}_{j\ge \ell}$ be $m$-regular bump functions
along so called quasihyperbolic geodesics $\sigma:x_0\curvearrowright y_0$ and $\rho=\sigma^{-1}:y_0\curvearrowright x_0$.
These bump functions approximate the Dirac's delta at the origin
and satisfy $\mathrm{supp}\,\phi_{\sigma,2^{-j}}(\cdot-x)\subset\Omega$
if $x$ is close to $x_0$. The geometry of uniform domains plays a crucial role
in the  construction of these functions.

Denote
\[\Omega(x_0,y_0)=B(x_0,r(x_0)\wedge (|x_0-y_0|/4b)),\quad r(x_0)=\mathrm{dist}(x_0,\partial\Omega)/4b,\]
where the constant $b\ge 1$ depending on the geometry of the domain is given in Lemma \cite[Lemma 4.16]{t1dom}.
Then, if $j,k\ge \ell=\ell(x_0,y_0)$
and $(x,y)\in \Omega(x_0,y_0)\times \Omega(y_0,x_0)\subset \Omega\times\Omega$, we denote
\begin{equation}\label{denoteds}
K_{j}^{\sigma,\rho}(x,y)=\int_\Omega \phi_{\sigma,2^{-j}}(\alpha-x)\int_{\Omega}
K(\alpha,\omega)\phi_{\rho,2^{-j}}(\omega-y)d\omega d\alpha.
\end{equation}
Due to continuity of the kernel in the domain $\Omega\times\Omega\setminus\{(x,x)\}$, we have  the following decomposition, valid for the
points $(x,y)\in \Omega(x_0,y_0)\times\Omega(y_0,x_0)$,
\begin{equation}\label{hajotelma1}
K(x,y)=\lim_{j\to\infty} K_j^{\sigma,\rho}(x,y)=K_\ell^{\sigma,\rho}(x,y)+\sum_{j=\ell+1}^\infty 
\big(K_{j}^{\sigma,\rho}-K_{j-1}^{\sigma,\rho}\big)(x,y).
\end{equation}
For notational purposes it is convenient to 
denote $\kappa^{\sigma,\rho}_\ell=K_\ell^{\sigma,\rho}$ and
express the differences inside the summation in the following way. Given
$j>\ell$ and points $x,y$ as above, we also denote 
\begin{equation}\label{munu}
\begin{split}
\kappa_j^{\sigma,\rho}(x,y)&:=K_j^{\sigma,\rho}(x,y)-K_{j-1}^{\sigma,\rho}(x,y) \\&= \int_\Omega
(\underbrace{\phi_{\sigma,2^{-j}}-\phi_{\sigma,2^{-j+1}}}_{=:\psi_{\sigma,j}})(\alpha-x)\int_\Omega K(\alpha,\omega)
\phi_{\rho,2^{-j}}(\omega-y)d\omega d\alpha\\
&\quad +\int_\Omega
\phi_{\sigma,2^{-j+1}}(\alpha-x)\int_\Omega K(\alpha,\omega)
(\underbrace{\phi_{\rho,2^{-j}}-\phi_{\rho,2^{-j+1}}}_{=:\psi_{\rho,j}})(\omega-y)d\omega d\alpha
\\&=:\mu^{\sigma,\rho}_j(x,y) + \nu^{\sigma,\rho}_j(x,y).
\end{split}
\end{equation}
As a consequence, we can write
\begin{equation}\label{hajotelma}
K(x,y)=\sum_{j=\ell}^\infty \kappa^{\sigma,\rho}_j(x,y),\quad (x,y)\in \Omega(x_0,y_0)\times\Omega(y_0,x_0).
\end{equation}

The proof of the following auxiliary lemma is essentially the same
as the proof of \cite[Lemma 4.33]{t1dom}. One of the important
ingredients is that the differences of consecutive bump functions satisfy 
\[\int_{\R^n} x^\alpha \psi_{\sigma,j}(x)dx=0=\int_{\R^n} x^\alpha \psi_{\rho,j}(x)dx,\quad |\alpha|\le m,\]
and one of these differences appear in the definition of both $\mu^{\sigma,\rho}_j$ and $\nu^{\sigma,\rho}_j$.
This allows one to connect to the integral
estimate \eqref{k2}, satisfied by standard kernels and their transposes.

\begin{lem}\label{apu2}
Let $\Omega\subset\R^n$ be a uniform domain and
$T\in \mathrm{SK}^{-m}_\Omega(\delta)$
be associated with a kernel $K\in\mathrm{K}^{-m}_\Omega(\delta)$ that
is decomposed as in \eqref{hajotelma}. 
Let $j\ge \ell=\ell(x_0,y_0)$.
Then the summands in this decomposition enjoy the regularity  \[\kappa_{j}^{\sigma,\rho}\in C^\infty(\Omega(x_0,y_0)\times\Omega(y_0,x_0))\] and, if $\alpha,\beta\in \N_0^n$ and
$(x,y)\in \Omega(x_0,y_0)\times\Omega(y_0,x_0)$, they satisfy the estimate
\begin{equation}\label{bada}
|\partial^{\alpha}_x\partial^\beta_y \kappa_{j}^{\sigma,\rho}(x,y)|\le 
C2^{j(|\alpha|+|\beta|-m-\delta)}|x-y|^{-n-\delta},
\end{equation}
where the constant $C$ depends at most on  
$n,m,\alpha,\beta,K,\Omega$.
\end{lem}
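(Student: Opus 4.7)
The claim splits cleanly into the base case $j=\ell$ (no cancellation) and the increments $j>\ell$ (vanishing-moment cancellation). Smoothness of $\kappa_j^{\sigma,\rho}$ on $\Omega(x_0,y_0)\times\Omega(y_0,x_0)$ follows in both cases by differentiation under the integral in \eqref{denoteds}: the bumps are $C^\infty$ with supports that, by construction in \cite[Lemma 4.16]{t1dom}, stay compactly inside $\Omega$ uniformly as $(x,y)$ varies in this product, and $K$ is continuous off the diagonal. For $j=\ell$ the estimate \eqref{bada} is routine: the size bound \eqref{k1} gives $|K(\alpha,\omega)|\lesssim|x-y|^{m-n}$ on the integration region, while $\|\partial^\alpha\phi_{\sigma,2^{-\ell}}\|_1\lesssim 2^{\ell|\alpha|}$ (and similarly in $y$), and $2^{-\ell}\sim|x-y|$ repackages this as $2^{\ell(|\alpha|+|\beta|-m-\delta)}|x-y|^{-n-\delta}$.

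For $j>\ell$, the decomposition \eqref{munu} reduces matters to estimating $\mu_j^{\sigma,\rho}$ and $\nu_j^{\sigma,\rho}$ separately, and by the symmetry of \eqref{k2} under transposition I concentrate on $\mu_j^{\sigma,\rho}$. Passing the derivatives to the bumps,
\[
\partial_x^\alpha\partial_y^\beta\mu_j^{\sigma,\rho}(x,y)=(-1)^{|\alpha|+|\beta|}\int_\Omega(\partial^\alpha\psi_{\sigma,j})(\alpha-x)\int_\Omega K(\alpha,\omega)(\partial^\beta\phi_{\rho,2^{-j}})(\omega-y)\,d\omega\,d\alpha,
\]
and crude $L^\infty$ bounds on the bumps supply the factor $2^{j(|\alpha|+|\beta|)}$; the remaining $2^{-j(m+\delta)}|x-y|^{-n-\delta}$ must come from cancellation. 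The vanishing moments $\int z^\gamma\psi_{\sigma,j}(z)\,dz=0$ for $|\gamma|\le m$ persist, after integration by parts, for $\partial^\alpha\psi_{\sigma,j}$. Following \cite[Lemma 4.33]{t1dom} I rewrite $(\partial^\alpha\psi_{\sigma,j})(\alpha-x)$ as an $(m+1)$-th order finite difference of a smooth cutoff at scale $2^{-j}$, so that the $\alpha$-integral takes the shape of the left-hand side of the transposed \eqref{k2} applied to $K(\cdot,\omega)$ on a ball $B\subset\subset\Omega$ of radius $\sim 2^{-j}$. Since $|\omega-y^B|\sim|x_0-y_0|\sim|x-y|$ for $\omega$ in the support of $\phi_{\rho,2^{-j}}(\cdot-y)$, applying \eqref{k2} together with $|B|^{1+(m+\delta)/n}\sim 2^{-j(n+m+\delta)}$ and the $L^\infty$ size of the second bump extracts exactly $2^{-j(m+\delta)}|x-y|^{-n-\delta}$.

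The main technical hurdle is the finite-difference reformulation combined with verifying that the resulting ball $B$ lies compactly inside $\Omega$ with $C_K\operatorname{diam}(B)\le|x-y^B|$, the hypothesis under which \eqref{k2} becomes applicable. This is exactly what the uniform-domain assumption, and the placement of the bumps $\phi_{\sigma,2^{-j}}$ along the quasihyperbolic geodesic $\sigma$ in \cite[Lemma 4.16]{t1dom}, are designed to ensure: the bump centers sit at distance $\gtrsim 2^{-j}$ from $\partial\Omega$ and move smoothly along the path, which is precisely what forces the restriction $(x,y)\in\Omega(x_0,y_0)\times\Omega(y_0,x_0)$ in the statement.
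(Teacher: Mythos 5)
Your proposal is correct and takes essentially the same approach as the paper, which itself gives no detailed proof but refers to \cite[Lemma 4.33]{t1dom} and highlights only that the vanishing moments of the differences $\psi_{\sigma,j},\psi_{\rho,j}$ are what allow the decomposition \eqref{munu} to be linked to the semilocal integral estimate \eqref{k2} and its transpose. Your reconstruction --- separating the base case $j=\ell$, passing derivatives onto the bumps, and converting the vanishing-moment cancellation of $\partial^\alpha\psi_{\sigma,j}$ (resp.\ $\partial^\beta\psi_{\rho,j}$) into an $(m+1)$-th order difference so that \eqref{k2} applies on a ball of radius $\sim 2^{-j}$ positioned along the quasihyperbolic geodesic --- is precisely the indicated mechanism.
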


We have performed all the preparations, and we can
proceed to the actual proof of Theorem \ref{fixss}.
It will be a straightforward
modification of the proof of Theorem \cite[Theorem 4.37]{t1dom}, and we
only give the required modifications.

Let $x_0,y_0\in \Omega$ be distinct   and  $\ell=\ell(x_0,y_0)$. 
Let $(x,y)\in \Omega(x_0,y_0)\times\Omega(y_0,x_0)$
and $|\alpha|+|\beta|\le m$. Then $|x-y|\ge|x_0-y_0|/2$ and,
combining this estimate with Lemma \ref{apu2}, we obtain
\begin{equation}\label{es}
\begin{split}
&\sum_{j=\ell}^\infty 
|\partial^\alpha_x \partial^\beta_y \kappa_{j}^{\sigma,\rho}(x,y)|
\le 
C|x_0-y_0|^{-n-\delta}\sum_{j=\ell}^\infty
2^{j(|\alpha|+|\beta|-m-\delta)}
\le C |x_0-y_0|^{m-n-|\alpha|-|\beta|}.
\end{split}
\end{equation}
The Weierstrass $M$--test, combined with the identity \eqref{hajotelma}, shows
that \begin{equation}\label{esita}
K|(\Omega(x_0,y_0)\times \Omega(y_0,x_0))=\sum_{j=\ell}^\infty \kappa_{j}^{\sigma,\rho}\in C^m(\Omega(x_0,y_0)\times \Omega(y_0,x_0)),\end{equation} and the series
 can be differentiated termwise up to the order $m$.
As a consequence of this identity we have the regularity
$K\in C^m(\Omega\times\Omega\setminus\{(x,x)\}$ and, by using  \eqref{es}, we also have the estimate
\begin{equation}\label{derivata}
|\partial^\alpha_x\partial^\beta_y K(x_0,y_0)|\le
C|x_0-y_0|^{m-n-|\alpha|-|\beta|},\quad |\alpha|+|\beta|\le m,
\end{equation}
which is the required size-estimate for smooth kernels.

We turn to the required H\"older-regularity estimates.
Due to symmetry it suffices to consider differences in the first $\R^n$-variable only.
To begin with consider the situation, where $x_0,y_0\in\Omega$ are distinct points,  $|\alpha|+|\beta|=m$, and 
$h\in \R^n$ is close to $x_0$ so that $x_0+h\in \Omega(x_0,y_0)$.
Fix $j_0\in\Z$ such that $2^{-j_0}<|h|\le 2^{-j_0+1}$.

Fix $j\ge \ell=\ell(x_0,y_0)$ and  denote 
\[
\Delta_h^1(\partial^\alpha_x\partial^\beta_y \kappa_{j}^{\sigma,\rho}(\cdot,y_0),x_0)=\partial^\alpha_x \partial^\beta_y \kappa_{j}^{\sigma,\rho}(x_0+h,y_0)-
\partial^\alpha_x \partial^\beta_y \kappa_{j}^{\sigma,\rho}(x_0,y_0).
\]
Applying the mean value theorem and Lemma \ref{apu2} we find 
a point $\xi\in\R^n$, belonging to the line segment $[x_0,x_0+h]\subset \Omega(x_0,y_0)$,
so that $|\xi-y_0|\ge |x_0-y_0|/2$ and
\begin{equation}\label{alppa}
\begin{split}
|\Delta_h^1(\partial^\alpha_x\partial^\beta_y \kappa_{j}^{\sigma,\rho}(\cdot,y_0),x_0)|&\le|h||\nabla_x(\partial^\alpha_x\partial^\beta_y \kappa_{j}^{\sigma,\rho})(\xi,y_0)|
\le C|h|2^{j(1-\delta)}|x_0-y_0|^{-n-\delta}.
\end{split}
\end{equation}
Using  the triangle inequality and
Lemma \ref{apu2},
we also have the estimate
\begin{equation}\label{betta}
\begin{split}
|\Delta_h^1(\partial^\alpha_x\partial^\beta_y \kappa_{j}^{\sigma,\rho}(\cdot,y_0),x_0)|
\le
C2^{-j\delta}|x_0-y_0|^{-n-\delta}.
\end{split}
\end{equation}
By using these, we have
\begin{equation}\label{tos}
\begin{split}
&\sum_{j=\ell}^\infty 
|\partial^\alpha_x \partial^\beta_y \kappa_{j}^{\sigma,\rho}(x_0+h,y_0)-\partial^\alpha_x\partial^\beta_y \kappa_{j}^{\sigma,\rho}(x_0,y_0)|\\
&\le C|x_0-y_0|^{-n-\delta}\bigg(|h|\sum_{j=-\infty}^{j_0} 2^{j(1-\delta)} + \sum_{j=j_0}^\infty
2^{-j\delta}\bigg)
\le C|h|^\delta|x_0-y_0|^{-n-\delta}.
\end{split}
\end{equation}
It follows that 
\begin{equation}\label{eku}
|\partial^\alpha_x\partial^\beta_y K(x_0+h,y_0)-\partial^\alpha_x\partial^\beta_y K(x_0,y_0)|\le 
C|h|^{\delta}|x_0-y_0|^{-n-{\delta}},
\end{equation}
if $x_0+h\in \Omega(x_0,y_0)$.

What comes next is to establish this estimate for more general $h$'s. This 
argument proceeds precisely as in \cite[p. 78]{t1dom}, and we omit the details
which are based on the uniformity of the domain.

\subsection{Extension of kernels}
Following theorem gives an extension result for smooth kernels defined on uniform domains.
According to Theorem \ref{fixss} it equally well gives an extension theorem for standard kernels.

\begin{thm}\label{extensionB}
Let $\Omega\subset\R^n$ be a uniform domain,
$0<m<n$, $0<\delta<1$, and
$K\in \mathcal{K}^{-m}_\Omega(\delta)$ be a smooth kernel. Then there
exists 
$\tilde K\in \mathcal{K}^{-m}_{\R^n}(\delta)$ such that 
\[\tilde K|\Omega\times \Omega\setminus \{(x,x)\}=K.\] In words, 
$K$ has an extension to a smooth kernel
$\tilde K:\R^n\times\R^n\setminus\{(x,x)\}\to\C$.
\end{thm}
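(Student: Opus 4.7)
The plan is to construct $\tilde K$ by a Whitney-style extension adapted to the off-diagonal kernel setting: Whitney-decompose $\R^n\setminus\overline{\Omega}$, reflect each exterior cube to an interior point using the uniformity of $\Omega$, and define $\tilde K$ off $\Omega\times\Omega$ as a partition-of-unity average of Taylor polynomials of $K$ based at the reflection points.

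Concretely, first I would fix a Whitney decomposition $\R^n\setminus\overline{\Omega}=\bigsqcup_k Q_k$ with $\ell(Q_k)\approx\mathrm{dist}(Q_k,\partial\Omega)$. Uniformity of $\Omega$ supplies interior access to each boundary point with bounded distortion, so I can assign to each $Q_k$ a reflection $p_k\in\Omega$ satisfying $|x-p_k|\le C\ell(Q_k)$ for $x\in Q_k$. Take a smooth partition of unity $\{\phi_k\}$ subordinate to a mild enlargement of $\{Q_k\}$, with the standard derivative bounds $|\partial^\alpha\phi_k|\le C\ell(Q_k)^{-|\alpha|}$. Set $\tilde K:=K$ on $\Omega\times\Omega\setminus\{(x,x)\}$, and for $(x,y)$ with $x\in\R^n\setminus\overline{\Omega}$ and $y\in\Omega$ define
\[
\tilde K(x,y)=\sum_k\phi_k(x)\,\bigl(T_{m,1}^{p_k}K(\cdot,y)\bigr)(x),
\]
where $T_{m,1}^{p_k}K(\cdot,y)$ denotes the order-$m$ Taylor polynomial of $K(\cdot,y)$ based at $p_k$. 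Define $\tilde K$ symmetrically on $\Omega\times(\R^n\setminus\overline{\Omega})$, and by an iterated tensor Taylor expansion on $(\R^n\setminus\overline{\Omega})^2$.

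The size estimate $|\partial^\alpha_x\partial^\beta_y\tilde K(x,y)|\le C|x-y|^{m-n-|\alpha|-|\beta|}$ then reduces to the smooth-kernel estimates for $K$ at the pair $(p_k,y)$ — which has $|p_k-y|\approx|x-y|$ since the reflection is distance-preserving up to constants — combined with the bounds on derivatives of $\phi_k$; the standard cancellation $\sum_k\partial^\alpha\phi_k\equiv 0$ for $|\alpha|\ge 1$ is used to replace Taylor polynomials by Taylor remainders where needed, which delivers the correct homogeneity. The main obstacle will be the H\"older estimate for $|\alpha|+|\beta|=m$, specifically the case in which $y$ and $y+h$ straddle $\partial\Omega$, so that one term of the difference uses the true kernel $K$ and the other uses the reflected formula. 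Here uniformity of $\Omega$ is essential: it guarantees that a reflected point $p_k$ near the boundary can be joined to an interior point near $y+h$ by a cigar in $\Omega$ that stays at distance $\ge c|x-y|$ from $x$, along which the native $C^{m,\delta}$-regularity of $K$ (Theorem \ref{fixss}) can be integrated to convert $|h|^\delta$ differences of reflected points into the target bound. Once this is verified, the construction yields $\tilde K\in\mathcal{K}^{-m}_{\R^n}(\delta)$ extending $K$, and it should be viewed as the natural bivariate off-diagonal analog of Jones' extension theorem for $C^{m,\delta}$-functions on uniform domains.
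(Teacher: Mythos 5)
Your Whitney decomposition lives in the wrong space. You decompose $\R^n\setminus\overline\Omega$ and then Taylor-expand $K(\cdot,y)$ at the reflected point $p_k$; the paper's proof (\cite[Theorem 5.56]{t1dom}) instead Whitney-decomposes the off-diagonal set $\R^n\times\R^n\setminus\{(x,x)\}$ in $\R^{2n}$. That difference is not cosmetic: it is exactly what keeps the construction away from the kernel singularity, and your version does not.

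Here is the concrete failure. Take $\Omega=\R^n_+$, $x=(0,\dots,0,-1)$, so the active Whitney cube has $\ell(Q_k)\approx 1$, and any admissible reflected point $p_k\in\Omega$ satisfies $\mathrm{dist}(p_k,\partial\Omega)\approx 1$; take $p_k=(0,\dots,0,1)$. Now let $y\to p_k$ inside $\Omega$. Then $|x-y|\to 2$ stays bounded, so the target estimate demands $|\tilde K(x,y)|\lesssim 2^{m-n}$, but your formula $\tilde K(x,y)=\sum_k\phi_k(x)\,T^{p_k}_{m,1}K(\cdot,y)(x)$ contains the zeroth-order Taylor coefficient $K(p_k,y)\approx|p_k-y|^{m-n}\to\infty$. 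More generally, whenever $d_x\approx d_y\approx|x-y|$ the reflected point $p_k$ is forced to sit in the same annular shell around $\partial\Omega$ as $y$, and there is no choice of corkscrew constant that prevents $y$ from landing near $p_k$. Averaging $p_k$ over a ball does not repair this: the top-order coefficients scale like $|p_k-y|^{-n}$, which is non-integrable across the singularity. So the asserted comparability $|p_k-y|\approx|x-y|$, on which every step of your size estimate rests, is simply false in this regime, and it is the first estimate -- not the straddling H\"older estimate you flag -- that collapses.

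The paper's route avoids this by design: Whitney cubes $Q\subset\R^{2n}\setminus\{(x,x)\}$ have $\ell(Q)\approx\mathrm{dist}(Q,\mathrm{diag})$, so on each such cube the kernel is a genuine bounded $C^{m,\delta}$ function after rescaling, the kernel space is characterized by an atomic decomposition indexed by these cubes, and extension reduces to H\"older extension of each atom from the (uniform, hence Jones-extendable) set $\Omega\times\Omega$ intersected with a fixed dilate of $Q$. Your bivariate Jones idea would become viable if you carried it out on the product domain $\Omega\times\Omega$ after first localizing away from the diagonal in $\R^{2n}$ -- but at that point you have reinvented the paper's argument. As written, the proposal has a gap that invalidates the size estimate and cannot be patched without changing the decomposition.
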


The proof of this theorem is available in \cite[Theorem 5.56]{t1dom}.
To describe it briefly, one decomposes  smooth kernels by using a partition of unity, subordinate
to the Whitney decomposition of the open set
$\R^n\times \R^n\setminus \{(x,x)\}$.
This 
results in a characterization of smooth kernels in terms of a certain atomic decomposition.
In particular, the extension problem reduces to the H\"older extension of individual kernel atoms, which are
(a priori) H\"older functions defined
on $\Omega\times\Omega$.

\subsection{$T1$ theorem for restricted WSIO's}
A standard kernel $K\in\mathrm{K}^{-m}_{\Omega}(\delta)$ gives a rise to a weakly singular integral operator
(abbreviated WSIO).
Such operators emerge in connection with elliptic PDE's on domains, and
their derivatives of order $m$ should be thought of as singular integral
operators. The boundedness properties of WSIO's are treated in \cite{t1dom}.

Here is the formal definition for WSIO's:
an integral operator $T$ is associated with a standard kernel if there
exists $K\in\mathrm{K}^{-m}_{\Omega}(\delta)$, $m\in\{1,2,\ldots,n-1\}$, such that
\[
Tf(x)=\int_\Omega K(x,y)f(y)dy,\quad x\in\Omega,\quad f\in C^\infty_0(\Omega).
\]
We denote this by $T\in\mathrm{SK}^{-m}_{\Omega}(\delta)$.
The following
theorem describes the boundedness properties of globally defined WSIO's
when restricted to a suitable domain.
A proof 
is in \cite[Theorem 3.118]{t1dom}.

\begin{thm}\label{korits}
Let  $\emptyset\not=\Omega\subset\R^n$ be a $c$-coplump domain such
that either $\Omega=\R^n$ or $\mathrm{diam}(\R^n\setminus\Omega)=\infty$.
Let $T\in\mathrm{SK}^{-m}_{\R^n}(\delta)$, where
$0<m<n$ and
$0<\delta<1$. Then the following two conditions are equivalent
\begin{itemize}
\item 
$T\chi_\Omega,T^*\chi_\Omega\in\dot f^{m,2}_\infty(\Omega)$,
\item $\partial^\sigma T,\partial^\sigma T^*\in\mathcal{L}(L^2(\Omega))$ if $|\sigma|=m$.
\end{itemize}
Furthermore, if  these conditions hold true, then there exists
$\hat T\in\mathrm{SK}^{-m}_{\R^n}(\delta)$ whose associated kernel coincides to that
of $T$ on  $\Omega\times\Omega\setminus\{(x,x)\}$,
\begin{equation}\label{eq}
\langle Tf,g\rangle = \langle \hat Tf,g\rangle,\quad f,g\in C^\infty_0(\Omega),
\end{equation}
the operator $\hat T$ satisfies  two equivalent conditions above with $\Omega=\R^n$,
and operators
$\partial^\sigma \hat T$ and $\partial^\sigma \hat T^*$, $|\sigma|=2$,  are Calder\'on--Zygmund operators.
\end{thm}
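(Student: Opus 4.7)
The plan is to prove the equivalence of the two conditions first, and then use coplumpness to build the global extension $\hat T$ and conclude via the classical $T1$ theorem on $\R^n$.

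For the equivalence, the direction from $L^2$-boundedness of $\partial^\sigma T$, $|\sigma|=m$, to the $T1$-type condition $T\chi_\Omega\in\dot f^{m,2}_\infty(\Omega)$ is expected to be the easier one: distributional differentiation transfers the $m$ derivatives onto $\chi_\Omega$, and the $L^2(\Omega)$ output, lifted through the Triebel--Lizorkin scale by integration of order $m$ and the embedding $L^2\hookrightarrow \dot f^{0,2}_\infty$, delivers membership of $T\chi_\Omega$ and $T^*\chi_\Omega$ in $\dot f^{m,2}_\infty(\Omega)$. The reverse direction is a $T1$-theorem adapted to a uniform domain. I would use the dyadic resolution of unity from the proof of Theorem~\ref{fixss} (already invoked in Lemma~\ref{apu2}), which plays the role of the translation-invariant Littlewood--Paley resolution of the classical David--Journ\'e argument. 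Decomposing the kernel of $T$ into the pieces $\kappa_j^{\sigma,\rho}$ and testing $\partial^\sigma T$ against paraproducts built from this resolution, the hypothesis $T\chi_\Omega,T^*\chi_\Omega\in\dot f^{m,2}_\infty(\Omega)$ handles the diagonal paraproduct contributions, while the off-diagonal contributions are absorbed using the moment cancellation of $\psi_{\sigma,j},\psi_{\rho,j}$ together with the derivative estimate \eqref{bada}.

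For the construction of $\hat T$, the coplumpness of $\Omega$ and the condition that either $\Omega=\R^n$ or $\mathrm{diam}(\R^n\setminus\Omega)=\infty$ provide the room inside the complement to absorb the defect. I would keep the kernel of $\hat T$ equal to that of $T$ on $\Omega\times\Omega$, and add a correction kernel supported in a neighbourhood of $\R^n\setminus\Omega$, chosen so that the global data $\hat T\chi_{\R^n}$ and $\hat T^*\chi_{\R^n}$ land in $\dot f^{m,2}_\infty(\R^n)$ with restriction to $\Omega$ agreeing with $T\chi_\Omega$ and $T^*\chi_\Omega$. Coplumpness supplies balls $B(z,r/c)\subset \R^n\setminus\Omega$ at every scale $r$, and the diameter hypothesis allows $r$ to be taken arbitrarily large; this yields enough degrees of freedom to realize the prescribed defect by an atomic sum (in the spirit of the Whitney-type decomposition behind Theorem~\ref{extensionB}) without violating the standard-kernel estimates \eqref{k1}--\eqref{k2} across $\partial\Omega$.

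Once $\hat T\in\mathrm{SK}^{-m}_{\R^n}(\delta)$ has been built so that $\hat T\chi_{\R^n},\hat T^*\chi_{\R^n}\in\dot f^{m,2}_\infty(\R^n)$, applying $\partial^\sigma$ with $|\sigma|=m$ turns these global Triebel--Lizorkin data into $\mathrm{BMO}$ data for $\partial^\sigma\hat T$ and $\partial^\sigma\hat T^*$. The classical David--Journ\'e $T1$ theorem then identifies both operators as Calder\'on--Zygmund operators on $\R^n$, which is the principal assertion of the theorem; the corresponding $L^2(\Omega)$-boundedness of $\partial^\sigma T$ follows upon restricting by \eqref{eq}. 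The main obstacle will be the correction step of the construction of $\hat T$: one must simultaneously preserve the kernel of $T$ on $\Omega\times\Omega$, enforce membership of the paraproducts against $\chi_{\R^n}$ in $\dot f^{m,2}_\infty(\R^n)$, and keep the corrected kernel standard across $\partial\Omega$. Without coplumpness one cannot place correcting atoms at every dyadic scale inside $\R^n\setminus\Omega$, and without the diameter condition the largest scales of the Triebel--Lizorkin defect cannot be matched; it is precisely these geometric hypotheses that make the whole extension procedure available.
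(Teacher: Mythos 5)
The paper does not actually prove this theorem within the present document: it is cited verbatim from the author's thesis (\cite[Theorem~3.118]{t1dom}), and the paper gives only a one-paragraph description of the method, namely that the proof ``depends on certain reflected paraproduct operators whose role is twofold: they are used in a reduction as in the proof of David and Journ\'e, but they also modify the associated kernel $K$ outside of the product domain $\Omega\times\Omega$ to reach a Calder\'on--Zygmund operator $\hat T$,'' with coplumpness used to control the boundary terms in that modification. Your outline is therefore being measured against a sketch, not a written-out proof.

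At the level of that sketch your proposal is in the right territory for the equivalence of the two conditions (a David--Journ\'e-type $T1$ argument, with the domain dyadic resolution and the moment-cancelling differences $\psi_{\sigma,j},\psi_{\rho,j}$ playing the role of the classical Littlewood--Paley pieces). The genuine divergence, and the place where your proposal becomes unconvincing, is the construction of $\hat T$. You treat the extension as an independent second step: keep the kernel of $T$ on $\Omega\times\Omega$ and then patch in a generic atomic correction supported near $\R^n\setminus\Omega$ so that the prescribed Triebel--Lizorkin defect of $\hat T\chi_{\R^n}$ and $\hat T^*\chi_{\R^n}$ is realized. The paper's device is structurally different: the \emph{reflected paraproducts} that already carry out the David--Journ\'e reduction are the same objects that produce the modified kernel outside $\Omega\times\Omega$, so the reduction and the extension are one and the same construction. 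This is not a cosmetic difference. Your ``atomic sum'' proposal must simultaneously (i) leave the kernel untouched on $\Omega\times\Omega$, (ii) force $\hat T\chi_{\R^n},\hat T^*\chi_{\R^n}\in\dot f^{m,2}_\infty(\R^n)$, and (iii) keep the corrected kernel inside $\mathrm{SK}^{-m}_{\R^n}(\delta)$, including the semilocal integral estimate \eqref{k2} across balls straddling $\partial\Omega$; you assert that coplumpness gives ``enough degrees of freedom'' to do all three, but you give no mechanism that couples the paraproduct symbol to be matched with the placement of the correcting atoms, nor any reason the resulting kernel remains standard across the boundary. That coupling is precisely what the reflected paraproducts provide, because a paraproduct built against the correct symbol automatically has a standard kernel and automatically has the right image of $\chi_{\R^n}$. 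Without that ingredient, your extension step is a description of what is wanted rather than a construction of it, and it is the step on which the whole argument hinges.
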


The space $\dot f^{m,2}_\infty(\Omega)$ is a $\mathrm{BMO}$-type Sobolev space, see \cite[Definition 3.47]{t1dom}.
Hence Theorem \ref{korits} is similar to the well known result about Calder\'on--Zygmund type
operators: $T1$ theorem due to David and Journ\'e \cite{D-J}, in which
the characterizing conditions for $L^2$ boundedness include that $T1,T^*1\in\mathrm{BMO}(\R^n)$.

The boundedness properties of restricted operators can be used
to study WSIO's that are (a priori) defined on domains. Indeed, by using theorem \ref{extensionB} we can
first extend a given operator $T\in\mathrm{SK}^{-m}_{\Omega}(\delta)$ --
which is associated to a kernel $K\in\mathrm{K}^{-m}_{\Omega}(\delta)$ --
to a globally defined operator $\hat T\in\mathrm{SK}^{-m}_{\R^n}(\delta)$
if the underlying domain is uniform. This extension is given by the formula
\[
\tilde Tf(x)=\int_{\R^n}Ê\tilde K(x,y)f(y)dy,\quad x\in\R^n,\quad f\in C^\infty_0(\R^n).
\]
Because the extended kernel $\tilde K$ coincides with the kernel $K$ 
in $\Omega\times\Omega\setminus\{(x,x)\}$, we see that
$\partial^\sigma \tilde T\in \mathcal{L}(L^p(\Omega))$ if, and only if, $\partial^\sigma T\in\mathcal{L}(L^p(\Omega))$.


The proof of Theorem \ref{korits} depends
on certain reflected paraproduct operators whose role is twofold: they are used in
a reduction as in the proof of David and Journ\'e, but  they also modify the associated kernel $K$ outside
of
the product domain
$\Omega\times\Omega$ to reach a Calder\'on--Zygmund operator $\hat T$ whose second order partials are bounded on $L^2(\R^n)$. 
The novelty
lies in this modification procedure where certain boundary terms are  treated by using 
coplumpness.
\section{Proof of main results}\label{three}

Theorem \ref{dss} is proven by using Schauder theory. A byproduct is that the 
Green's function satisfies the standard kernel estimates, and the Green's operator
is WSIO associated to a standard kernel. Finally the machinery 
in Section \ref{two} is invoked to finish the proof of Theorem \ref{mainth}.

\subsection{Schauder estimates on $C^{2,\alpha}$ domains}

We invoke certain estimates for the solutions of 
second order elliptic equations.
These are classical Schauder estimates involving a (possibly empty) boundary portion \cite{gilbarg}.

\begin{defn}
An open set $D\subset\R^n$ will be said to have a boundary partion
$T\subset \partial D$ (of class $C^{2,\alpha}$) if at
each point $\bar y\in T$ there is a ball $B\subset B(\bar y,\rho)$ which is centered at $\bar y$, 
which satisfies $B\cap\partial D\subset T$, and
in which
the conditions (1)--(3) of Definition \ref{smoothdom} are satisfied. 
\end{defn}

We invoke the following notation from \cite[pp. 95--96]{gilbarg}.
Let $D$ be an open set in $\R^n$ with a boundary partion
$T$ of class $C^{2,\alpha}$. For $x,y\in\Omega$, we set
\[
\bar \delta_x=\mathrm{dist}(x,\partial D\setminus T),\quad \bar \delta_{x,y}=\bar \delta_x\wedge \bar \delta_y.
\]
For bounded continuous functions $u\in C(D)$ we define $|u|_{0,D}=\sup_{x\in D}|u(x)|$ and,
for functions $u\in C^{k,\alpha}(D\cup T)$, we define 
\begin{align*}
[u]^*_{k,0;D\cup T} &= [u]^*_{k;D\cup T}=\sup_{x\in D;|\beta|=k} \bar \delta_x^k |\partial^\beta u(x)|,\quad k=0,1,\ldots \\
[u]^*_{k,\alpha;D\cup T}Ê&= \sup_{x,y\in D;|\beta|=k} \bar \delta_{x,y}^{k+\alpha}\frac{|\partial^\beta u(x)-\partial^\beta u(y)|}{|x-y|^{\alpha}},\quad 0<\alpha \le 1;\\
|u|^*_{k,0;D\cup T}Ê&= |u|^*_{k;D\cup T} = \sum_{j=0}^k [u]^*_{j;D\cup T};\\
|u|^*_{k,\alpha;D\cup T}Ê&= |u|^*_{k;D\cup T} + [u]^*_{k,\alpha;D\cup T};\\
|u|^{(k)}_{0,\alpha,D\cup T} &= \sup_{x\in D} \bar \delta_x^k |u(x)| + \sup_{x,y\in D} \bar \delta_{x,y}^{k+\alpha} \frac{|u(x)-u(y)|}{|x-y|^\alpha}.
\end{align*}
In case $T=\emptyset$ we denote $D\cup T=D$ in the definitions above.

We rely on the following local boundary estimate, see  \cite[Theorem 6.2.]{gilbarg} and
\cite[Lemma 6.4.]{gilbarg}.

\begin{lem}\label{boupar}
Assume that $D\subset \R^n$ is a proper open subset of $\R^n_+$ with  (possibly empty) boundary portion
$T\subset \partial \R^n_+\cap \partial D$. Suppose that $\tilde u\in C^{2,\alpha}(D\cup T)$ is a bounded solution in $D$ of 
\[
\tilde L \tilde u(x)=\sum_{i,j=1}^n \tilde a^{ij}(x)\partial_{ij} \tilde u(x) + \sum_{i=1}^n \tilde b^{i}(x)\partial_i \tilde u(x)  = 0,\quad x\in D,
\]
and it satisfies the boundary condition $\tilde u\equiv 0$ on $T$. We also assume that  $\tilde L$ is strictly elliptic in the sense that there
exists a positive constant $\tilde \lambda>0$ such that
\[
\sum_{i,j=1}^n \tilde a^{ij}(x)\xi_i\xi_j\ge \tilde \lambda|\xi|^2,\quad x\in D,\quad \xi\in\R^n.
\]
Furthermore, if $i,j\in \{1,2,\ldots,n\}$, the coefficients are assumed to satisfy $\tilde a^{ij}=\tilde a^{ji}$ and
\[|\tilde a^{ij}|^{(0)}_{0,\alpha;D\cup T} + |\tilde b^i|^{(1)}_{0,\alpha;D\cup T}\le \tilde \Lambda.\]
Under these assumptions, we have the estimate
\[
|\tilde u|^*_{2,\alpha;D\cup T}\le C|\tilde u|_{0;D},
\]
where $C=C(n,\alpha,\tilde \lambda,\tilde \Lambda)$.
\end{lem}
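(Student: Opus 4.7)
My plan is to reduce this weighted global boundary Schauder estimate to the classical local Schauder estimates on balls and half-balls, by exploiting the weight $\bar\delta_x = \mathrm{dist}(x,\partial D\setminus T)$ as a natural scale parameter. Concretely, for each $x_0\in D$ I would set $R = \bar\delta_{x_0}/4$: either $B(x_0,R)\subset D$ (interior case), or $B(x_0,R)$ meets $T$, in which case I straighten $T\cap B(x_0,R)$ via a $C^{2,\alpha}$ diffeomorphism as in Definition \ref{smoothdom} to obtain a problem of the same form on a portion of a half-ball with homogeneous Dirichlet data on the flat piece. The factor $\bar\delta_{x_0}$ is then the natural Schauder scale around $x_0$.

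In either case I would apply the standard local (interior or half-space boundary) Schauder estimate on $B(x_0,R)$, which after scaling yields
\begin{equation*}
\sum_{|\beta|\le 2} R^{|\beta|}|\partial^\beta \tilde u(x_0)| + \sum_{|\beta|=2} R^{2+\alpha}\sup_{x,y\in B(x_0,R/2)}\frac{|\partial^\beta \tilde u(x)-\partial^\beta \tilde u(y)|}{|x-y|^\alpha} \le C|\tilde u|_{0;B(x_0,R)} \le C|\tilde u|_{0;D},
\end{equation*}
where $C$ depends on $\tilde\lambda, \tilde\Lambda, n, \alpha$ and, in the boundary case, on the $C^{2,\alpha}$ bound $K$ of the flattening diffeomorphism. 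Taking the supremum over $x_0\in D$ produces the required bound on $|\tilde u|^*_{2;D\cup T}$.

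For the weighted H\"older seminorm $[\tilde u]^*_{2,\alpha;D\cup T}$, given distinct $x,y\in D$ and $|\beta|=2$ I would dichotomize according to whether $|x-y|\ge \bar\delta_{x,y}/8$ or $|x-y|<\bar\delta_{x,y}/8$. In the first case the pointwise bounds just obtained give $\bar\delta_{x,y}^{2+\alpha}|\partial^\beta\tilde u(x)-\partial^\beta\tilde u(y)|/|x-y|^\alpha \le C\bar\delta_{x,y}^2(|\partial^\beta\tilde u(x)|+|\partial^\beta\tilde u(y)|)\le C|\tilde u|_{0;D}$ by pure scaling. In the second case $x$ and $y$ lie in a common ball $B(x_0,R/2)$ with $R$ comparable to $\bar\delta_{x,y}$, so the local H\"older seminorm bound from the scaled Schauder estimate applies directly.

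The main obstacle is the flattening step: one must verify that after composing with the $C^{2,\alpha}$ diffeomorphism $\psi$, the transformed equation $\tilde L' v = 0$ on the half-ball still has symmetric coefficients satisfying the ellipticity and $|\cdot|^{(0)}_{0,\alpha}, |\cdot|^{(1)}_{0,\alpha}$ bounds with constants controlled by $\tilde\lambda, \tilde\Lambda$ and $K$, and that $\bar\delta_x$ (distance to $\partial D\setminus T$) transforms comparably to the Euclidean distance to the curved part of the boundary in the flattened picture. These are standard chain-rule computations, manageable because $\psi\in C^{2,\alpha}$ produces only lower-order terms and does not destroy the H\"older regularity of the coefficients. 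Everything after flattening is then a direct application of the half-space Schauder theory as in \cite{gilbarg}, and the final estimate $|\tilde u|^*_{2,\alpha;D\cup T}\le C|\tilde u|_{0;D}$ follows by assembling the local estimates.
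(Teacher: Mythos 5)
The central problem is a misreading of the hypotheses. In Lemma \ref{boupar} the boundary portion is assumed to satisfy $T\subset\partial\R^n_+\cap\partial D$, so $T$ already lies in the hyperplane $\{x_n=0\}$ and is flat by definition; there is nothing to straighten, and invoking a $C^{2,\alpha}$ diffeomorphism from Definition \ref{smoothdom} here is spurious. That flattening reduction is the content of the \emph{next} lemma (Lemma \ref{elles}), which handles a genuinely curved boundary portion by transforming to the half-space setting and then applying Lemma \ref{boupar}. A symptom of the misreading is that you assert the constant in the boundary case depends on the $C^{2,\alpha}$ bound $K$ of a flattening map, whereas the lemma's constant $C=C(n,\alpha,\tilde\lambda,\tilde\Lambda)$ has no $K$-dependence at all. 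Your final paragraph about the coefficients transforming under $\psi$ and $\bar\delta_x$ ``transforming comparably to the Euclidean distance to the curved part of the boundary in the flattened picture'' is thus describing work that belongs to the proof of Lemma \ref{elles}, not this one.

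Setting that aside, your underlying strategy is the correct and standard one: fix $R\asymp\bar\delta_{x_0}$, note that $B(x_0,R)$ either lies entirely in $D$ or that $B(x_0,R)\cap\overline{\R^n_+}\subset D\cup T$ with $B(x_0,R)\cap\partial\R^n_+\subset T$, apply the scaled interior or half-ball Schauder estimate (using that the weighted bounds $|\tilde a^{ij}|^{(0)}_{0,\alpha}$, $|\tilde b^i|^{(1)}_{0,\alpha}\le\tilde\Lambda$ translate, since $\bar\delta\asymp\bar\delta_{x_0}$ on $B(x_0,R)$, into scale-invariant unweighted bounds after rescaling by $R$), and finally split the weighted H\"older seminorm into the near-field and far-field cases. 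Bear in mind, though, that the paper itself offers no proof of Lemma \ref{boupar}: it is a verbatim restatement of \cite[Theorem~6.2 and Lemma~6.4]{gilbarg} with $f\equiv 0$, so what you are reconstructing is the argument of that reference rather than an argument appearing in the paper.
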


As a consequence, we obtain the following local boundary estimate for curved boundaries.

\begin{lem}\label{elles}
Let $\Omega\subset \R^n$ be a bounded $C^{2,\alpha}$ domain and $0<S\le \rho$.
Let $\bar y\in \partial\Omega$ and \[B=B(\bar y,S)\cap \Omega,\quad T=B(\bar y,S)\cap \partial\Omega\subsetneq \partial B.\]
Let $u\in C^{2,\alpha}(\bar B)$ satisfy $Lu=0$ on $B$, where $L$ is defined in \eqref{alla},
and assume that $u\equiv 0$ on $T$. Then
\[
|u|^{*}_{2,\alpha;B\cup T} \le C|u|_{0,B},
\]
where $C=C(\alpha,K,\Omega,L)$.
\end{lem}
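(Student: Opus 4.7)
The plan is to flatten the boundary portion $T$ using the $C^{2,\alpha}$ diffeomorphism furnished by Definition \ref{smoothdom}, reduce to the flat-boundary case governed by Lemma \ref{boupar}, and then pull back. Since $S\le\rho$, pick $\bar y_\ell$ from the finite cover following Definition \ref{smoothdom} so that $B(\bar y,S)\subset B(\bar y,\rho)\subset\subset B(\bar y_\ell,\rho(\bar y_\ell))$, and let $\psi$ be the straightening diffeomorphism at $\bar y_\ell$, with $\|\psi\|_{C^{2,\alpha}}+\|\psi^{-1}\|_{C^{2,\alpha}}\le K$. Set $D=\psi(B)\subset\R^n_+$, $T'=\psi(T)\subset\partial\R^n_+\cap\partial D$, and $\tilde u=u\circ\psi^{-1}\in C^{2,\alpha}(\bar D)$, which vanishes on $T'$.

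Next, transform $Lu=0$. Applying the chain rule to $\partial_i u$ and then the product rule to $\partial_i(a^{ij}\partial_j u)$, and re-expressing everything in the $y=\psi(x)$ variable, yields
\[
\tilde L\tilde u(y)=\sum_{k,l=1}^n \tilde a^{kl}(y)\partial_{kl}\tilde u(y)+\sum_{k=1}^n \tilde b^k(y)\partial_k\tilde u(y)=0,\quad y\in D,
\]
where
\[
\tilde a^{kl}(y)=\sum_{i,j}\bigl(a^{ij}\circ\psi^{-1}\bigr)(y)\,\bigl((\partial_j\psi^k)\circ\psi^{-1}\bigr)(y)\,\bigl((\partial_i\psi^l)\circ\psi^{-1}\bigr)(y),
\]
and $\tilde b^k$ is an analogous expression involving the first derivatives of $a^{ij}$ and the second derivatives of $\psi$. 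Symmetry $\tilde a^{kl}=\tilde a^{lk}$ is inherited from $a^{ij}=a^{ji}$, and strict ellipticity of $\tilde L$ with $\tilde\lambda=\lambda/K^2$ follows by writing $\sum_{k,l}\tilde a^{kl}\xi_k\xi_l = \sum_{i,j}a^{ij}\eta_i\eta_j$ with $\eta=(\nabla\psi)^T\xi$ evaluated at $\psi^{-1}(y)$, using $|\eta|\ge K^{-1}|\xi|$ from the bi-Lipschitz bound on $\psi^{-1}$. Since $a^{ij}\in C^{1,\alpha}(\bar\Omega)$ and $\psi,\psi^{-1}\in C^{2,\alpha}$, we have $\tilde a^{kl},\tilde b^k\in C^{0,\alpha}(\bar D)$, and because $D$ is bounded the weights $\bar\delta_y$ and $\bar\delta_{y,y'}$ appearing in $|\cdot|^{(0)}_{0,\alpha;D\cup T'}$ and $|\cdot|^{(1)}_{0,\alpha;D\cup T'}$ are bounded by $\mathrm{diam}(D)$, so the norms are controlled by a constant $\tilde\Lambda=\tilde\Lambda(K,\Omega,L)$.

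Lemma \ref{boupar} now applies and gives
\[
|\tilde u|^*_{2,\alpha;D\cup T'}\le C|\tilde u|_{0;D}
\]
with $C=C(n,\alpha,\tilde\lambda,\tilde\Lambda)=C(\alpha,K,\Omega,L)$. Finally, pull back. The bi-Lipschitz bounds give $K^{-1}\bar\delta_x^{B\cup T}\le \bar\delta_{\psi(x)}^{D\cup T'}\le K\bar\delta_x^{B\cup T}$, and the chain rule together with the $C^{2,\alpha}$ bounds on $\psi^{\pm 1}$ expresses second-order derivatives of $u$ in terms of derivatives of $\tilde u$ up to order two (and conversely), multiplied by polynomial expressions in the first and second derivatives of $\psi^{\pm 1}$. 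Since those derivatives are bounded in $C^{0,\alpha}$ by $K$, the induced weighted seminorms on $B\cup T$ and $D\cup T'$ are equivalent up to a constant depending only on $K$. Combined with the trivial identity $|\tilde u|_{0;D}=|u|_{0;B}$, the desired estimate follows.

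The main technical obstacle is the last step: verifying that the weighted $|\cdot|^*_{2,\alpha}$ seminorm is comparable under composition with $\psi^{\pm 1}$. The verification is direct but somewhat tedious because one must expand $\partial^\beta(u\circ\psi^{-1})$ via Faà di Bruno, use the $C^{2,\alpha}$ bound on $\psi^{-1}$ to control the polynomial factors in Hölder norm, and check that the weights $\bar\delta_x$ transform correctly (which they do, since $\partial B\setminus T$ maps onto $\partial D\setminus T'$ under the homeomorphism $\psi$, and $\psi$ is bi-Lipschitz with constant $K$). Once this seminorm equivalence is in hand, the proof is complete.
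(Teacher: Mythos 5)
Your proposal follows essentially the same route as the paper: straighten the boundary with the $C^{2,\alpha}$ diffeomorphism from Definition \ref{smoothdom} (choosing a chart from the finite cover since $S\le\rho$), transform the divergence-form equation into the non-divergence form $\tilde L\tilde u=0$ with $\tilde a^{kl},\tilde b^k\in C^{0,\alpha}(\bar D)$, verify ellipticity via $\sum\tilde a^{kl}\xi_k\xi_l=\sum a^{ij}\eta_i\eta_j$ with $\eta=(\nabla\psi)^T\xi$, apply Lemma \ref{boupar} on the flattened domain, and pull the estimate back using comparability of the weighted seminorms under the bi-Lipschitz map (which the paper disposes of by citing \cite[p.~96]{gilbarg}). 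Your version is correct and, if anything, slightly more explicit about the chart choice and the seminorm-equivalence step that the paper leaves to a citation.
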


\begin{proof}
By Definition \ref{smoothdom} there is a neighborhood $N$ of $\bar y$ 
such that $B(\bar y,S)\subset\subset N$,
and a diffeomorphism
$\psi$ defined on $N$ that straightens the boundary near $\bar y$. 
Denote 
\[
D'=\psi(B)\subset\R^n_+,\quad T'=\psi(T)\subset \partial \R^n_+\cap \partial D'.\] Then $T'$
is a boundary portion of $D'$. Under mapping
$y=\psi(x)=(\psi_1(x),\ldots,\psi_n(x))$, $x\in \bar B$, let $\tilde u(y)=u(x)$ and
$\tilde L\tilde u(y)=L u(x)=0$, where
\[
\tilde L\tilde u(y)=\sum_{i,j=1}^n \tilde a^{ij}(y)\partial_{ij}\tilde u(y) + \sum_{i=1}^n \tilde b^i(y)\partial_i \tilde u(y) = 0,\quad y=\psi(x)\in D',
\]
and
\begin{align*}
\tilde a^{ij}(y)&=\sum_{r,s=1}^n a^{rs}(x)\partial_r\psi_i(x)\partial_s \psi_j(x);\\\tilde b^i(y)&=\sum_{r,s=1}^n a^{rs}(x)\partial_{rs}\psi_i(x)+\sum_{r=1}^n \bigg(\sum_{s=1}^n \partial_s a^{sr}(x)\bigg)\partial_r\psi_i(x).
\end{align*}
A straightforward computation using \eqref{elliptic} shows that,
for $y\in D'$,
\begin{align*}
C(K,n)  \lambda|\xi|^2&\le 
\lambda |\nabla(\xi\cdot \psi)(x)|^2\\&\le  \sum_{r,s=1}^n a^{rs}(x)\partial_r (\xi\cdot \psi)(x)\partial_s(\xi\cdot \psi)(x)
=\sum_{i,j=1}^n \tilde a^{ij}(y)\xi_i\xi_j,\quad \xi\in\R^n,
\end{align*}
for a constant $C(n,K)>0$, depending on 
$n$ and on $K$.
Because $a^{ij}\in C^{1,\alpha}(\overline{\Omega})$ and $\mathrm{diam}(D')\le K\mathrm{diam}(B)\le K\mathrm{diam}(\Omega)$, we also have
\[
|\tilde a^{ij}|^{(0)}_{0,\alpha;D'\cup T'} + |\tilde b^i|^{(1)}_{0,\alpha;D'\cup T'} \le C(\alpha,K,\Omega,L).
\]
Furthermore, we have $\tilde u =u\circ \psi^{-1}\in C^{2,\alpha}(\overline{D'})$, so that
the conditions of Lemma \ref{boupar} are satisfied for the
equation $L\tilde u = 0$ in $D'$ with the boundary
portion $T'$. Therefore we have
\begin{align*}
|u|^*_{2,\alpha;B\cup T}&\le C(\alpha,K,\Omega)|\tilde u|^*_{2,\alpha;D'\cup T'}\\&\le C(\alpha,K,\Omega,L)|\tilde u|_{0,D'}=C(\alpha,K,\Omega,L)|u|_{0,B},
\end{align*}
where in the first inequality can be found in \cite[p. 96]{gilbarg}
This is the required estimate. 
\end{proof}

\subsection{Proof of Theorem \ref{dss}}
First we establish qualitative
$C^{2,\alpha}$ estimates
for the Green's function up to the boundary. In this connection
we advance quite rapidly, providing citations to the required regularity results.
Then we proceed to quantitative estimates, where
the prior Schauder estimates are used. The proof
of Theorem \ref{dss} is finished by local-to-global type H\"older estimate.

To begin with, by using Theorem \ref{gu}, we find
that the function
\[
u=G(x,\cdot),\quad x\in\Omega,
\]
belongs to $C(\partial\Omega\cup \Omega\setminus\{x\})$ if we define
$u(y)=0$ in points $y\in\partial\Omega$.
The required exterior sphere condition is satisfied in our situation:
by using the implicit function theorem, it follows that $\partial\Omega$ can be locally represented
as graph of a $C^{2,\alpha}$ function. Then the exterior sphere property follows from \cite[Lemma 2.2]{aikawa}.

Denote $\Omega(r)=\Omega\setminus B(x,r)$ if $r>0$ is so small that $B(x,r)\subset\subset\Omega$. 
According to Theorem \ref{gext}, function $u$ belongs to the Sobolev space $W^{1,2}(\Omega(r))$, and it is a weak solution
to the equation $Lu=0$ in the domain $\Omega(r)$. That is, it satisfies
\[
\int_{\Omega(r)}Êa^{ij}(y)\partial_j u(y)\partial_i \phi(y)dy = 0,\quad \phi\in C^1_0(\Omega(r)).
\]
The coefficients $a^{ij}$ belong to the space $C^{1,\alpha}(\overline{\Omega})$. 
In particular, they are bounded and uniformly Lipschitz continuous in $\Omega$. Hence, by
using \eqref{elliptic} and Theorem \cite[Theorem 8.8]{gilbarg}, we find that
$u\in W^{2,2}_{\loc}(\Omega(r))$.
Furthermore $u$ is a strong solution to the equation $Lu=0$:
\begin{equation}\label{repr}
Lu = \sum_{i,j=1}^n a^{ij}\partial_{ij} u + \sum_{i=1}^n \bigg(\sum_{j=1}^n \partial_j a^{ji}\bigg) \partial_i u = 0
\end{equation}
almost everywhere in $\Omega(r)$. Since the coefficients
in \eqref{repr} belong to $C^{0,\alpha}(\overline{\Omega(r)})$, we have
the regularity $u\in C^{2,\alpha}(\Omega(r))$ by Theorem \cite[Theorem 9.19]{gilbarg}.

From the discussion above it is now clear that
\[
u\in C(\overline{\Omega(r)})\cap C^{2,\alpha}(\Omega(r))
\]
is a classical solution to the equation \eqref{repr} in $\Omega(r)$ with boundary values $u=0$ in  
the $C^{2,\alpha}$ boundary portion $\partial\Omega\subset \partial\Omega(r)$. By using
\cite[Lemma 6.18]{gilbarg} we then deduce
that $u\in C^{2,\alpha}(\Omega(r)\cup \partial\Omega)$ and, because $r>0$ was
arbitrary, we find that $u\in C^{2,\alpha}(\Omega\setminus\{x\}\cup \partial \Omega)$
satisfies the equation \eqref{repr} pointwise in $\Omega\setminus\{x\}$

To conclude from above and by using Theorem \ref{gu}, the Green's function has
 the following properties in case $\Omega\subset\R^n$, $n\ge 3$, is a bounded $C^{2,\alpha}$ domain
 and the coefficients $a^{ij}$ belong to the space $C^{1,\alpha}(\overline{\Omega})$:
\begin{equation}\label{impreg}
\begin{cases}
L \lbrace G(x,\cdot)\rbrace(y)=0,\quad &x\in\Omega,y\in\Omega\setminus\{x\};\\
G(x,\cdot)\in C^{2,\alpha}(\partial\Omega\cup \Omega\setminus\{x\});\quad &x\in\Omega;\\
G(x,y)=0,\quad &x\in\Omega,y\in\partial\Omega;\\
G(x,y)\le C_L|x-y|^{2-n}\min\{1,\delta(x)|x-y|^{-1}\},\quad &x\in\Omega ,y\in\Omega\setminus\{x\}.
\end{cases}
\end{equation}

We are ready for the main parts of Theorem \ref{dss}.


\begin{lem}\label{gest}
The following size-estimate is valid for disjoint points $x,y\in\Omega$,
\begin{equation}\label{redsize}
|\partial^\beta_yG(x,y)|\le C|x-y|^{2-n-|\beta|}\min\bigg\{1,\frac{\delta(x)}{|x-y|}\bigg\},\quad |\beta|\le 2.
\end{equation}
If, in addition $|\beta|=2$ and $y+h\in B(y,\mathrm{dist}(y,\partial\Omega)\wedge \rho|x-y|/8\mathrm{diam}(\Omega))$, then
\begin{equation}\label{redhol}
|\partial^\beta_y G(x,y+h)-\partial^\beta_y G(x,y)|\le C|h|^\alpha |x-y|^{-n-\alpha}\min\bigg\{1,\frac{\delta(x)}{|x-y|}\bigg\}.
\end{equation}
Here $C=C(\alpha,K,\Omega,L)$.
\end{lem}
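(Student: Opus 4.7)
The plan is to derive both \eqref{redsize} and \eqref{redhol} by applying the Schauder estimate of Lemma~\ref{elles} (or its interior specialization obtained by taking $T=\emptyset$) to the function $u=G(x,\cdot)$ on a ball whose radius is comparable to $|x-y|$. The properties collected in \eqref{impreg} make this available: $u$ satisfies $Lu=0$ away from $x$, vanishes on $\partial\Omega$, belongs to $C^{2,\alpha}(\partial\Omega\cup\Omega\setminus\{x\})$, and obeys the size bound $G(x,z)\le C|x-z|^{2-n}\min\{1,\delta(x)/|x-z|\}$.

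I fix a length scale $R:=\rho|x-y|/(8\mathrm{diam}(\Omega))$, so that $R\le\min(\rho,|x-y|/8)$, and split into two cases. If $\delta(y)\ge R$, set $B=B(y,R)$ and $T=\emptyset$. If $\delta(y)<R$, choose $\bar y\in\partial\Omega$ with $|y-\bar y|=\delta(y)$ and set $B=B(\bar y,2R)\cap\Omega$, $T=B(\bar y,2R)\cap\partial\Omega$. In either case $y\in B$ and the effective distance $\bar\delta_y=\mathrm{dist}(y,\partial B\setminus T)$ is at least $R$. The restriction $R\le|x-y|/8$ forces $|x-z|\ge|x-y|/2$ for every $z\in B$ (immediate when $B=B(y,R)$; in the boundary case it follows from $|x-z|\ge|x-\bar y|-2R\ge|x-y|-\delta(y)-2R$). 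Consequently $x\notin B$, so $Lu=0$ on $B$ and $u\equiv 0$ on $T$, and the size bound in \eqref{impreg} collapses to
\[\|u\|_{L^\infty(B)}\le C|x-y|^{2-n}\min\{1,\delta(x)/|x-y|\}.\]

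Lemma~\ref{elles} then gives $|u|^*_{2,\alpha;B\cup T}\le C\|u\|_{L^\infty(B)}$. Extracting the pointwise piece $\bar\delta_y^{|\beta|}|\partial^\beta u(y)|\le C\|u\|_{L^\infty(B)}$ for $|\beta|\le 2$ and using $\bar\delta_y\ge R\gtrsim|x-y|$ yields \eqref{redsize}. For \eqref{redhol}, the stated bound on $|h|$ is precisely what keeps $y+h\in B$ and $\bar\delta_{y+h}\ge R$ (in the boundary case $|y+h-\bar y|\le\delta(y)+|h|<2R$, while $y+h\in\Omega$ thanks to $|h|\le\delta(y)$), after which extracting the second-order H\"older seminorm from $|u|^*_{2,\alpha;B\cup T}$ delivers the desired estimate. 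The main obstacle is coordinating three competing constraints on the choice of $B$: separation from $x$ (so $Lu=0$ holds on $B$), an internal distance $\bar\delta_y$ from $y$ comparable to $|x-y|$ (so the weighted Schauder seminorm upgrades to unweighted pointwise derivative bounds), and a radius $\le\rho$ (so that Lemma~\ref{elles} is applicable), while in the boundary case also being large enough to contain a useful portion of $\partial\Omega$. The factor $\rho/\mathrm{diam}(\Omega)$ in $R$ is what reconciles these requirements uniformly for all $|x-y|\in(0,\mathrm{diam}(\Omega)]$ and simultaneously dictates the admissible range of $h$ in \eqref{redhol}.
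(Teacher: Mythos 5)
Your overall strategy is the same as the paper's: localize $u=G(x,\cdot)$ to a ball of scale $R\sim|x-y|$, apply Lemma~\ref{boupar} (interior case, $T=\emptyset$) or Lemma~\ref{elles} (boundary case), and upgrade the weighted Schauder seminorm to unweighted pointwise bounds via a lower bound $\bar\delta\gtrsim R$. However, your ball radii are chosen with no margin, and the assertion $\bar\delta_{y+h}\ge R$ is in fact false in both of your cases. In the interior case you take $B=B(y,R)$ with $T=\emptyset$, so $\bar\delta_{y+h}=\mathrm{dist}(y+h,\partial B)=R-|h|$; since the hypothesis of the lemma allows $|h|$ arbitrarily close to $R$ when $\delta(y)\ge R$, the weight $\min\{\bar\delta_y,\bar\delta_{y+h}\}^{2+\alpha}=(R-|h|)^{2+\alpha}$ degenerates and the H\"older seminorm $[u]^*_{2,\alpha;B}$ no longer controls $|\partial^\beta u(y+h)-\partial^\beta u(y)|/|h|^\alpha$. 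In the boundary case you take $B=B(\bar y,2R)\cap\Omega$ and deduce $\bar\delta_{y+h}\ge R$ from $|y+h-\bar y|<2R$, but that inequality only yields $\bar\delta_{y+h}>0$: if $\delta(y)$ is just below your threshold $R$, then $\bar\delta_{y+h}\ge 2R-(\delta(y)+|h|)>2R-2\delta(y)$ collapses toward $0$.

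The fix is precisely the paper's choice of scales. The paper splits at $\delta(y_0)\lessgtr 2R$ (not $R$); in the interior case it applies the Schauder estimate on the larger ball $D=B(y_0,2R)$ and reads off the conclusion only at points of the inner ball $B(y_0,R)$, so $\bar\delta\ge R$ there; and in the boundary case it uses $B(\bar y,S)\cap\Omega$ with $S=8R$, which places $B(y_0,\delta(y_0))$ at distance $\ge S/2$ from $\R^n\setminus B(\bar y,S)$. These margins are what make the three constraints you correctly identified --- separation from $x$, radius $\le\rho$, and a uniform lower bound on $\bar\delta$ over the set of evaluation points --- hold simultaneously for all admissible $h$, not merely at $y$ itself.
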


\begin{proof}
Throughout the proof $C$ denotes a constant, which
depends at most on the parameters $\alpha,K,\Omega,L$.
We also denote by \[\Gamma_{\Omega}(x,y):=|x-y|^{2-n}\min\{1,\delta(x)|x-y|^{-1}\}\]
the upper bound  in \eqref{impreg}.
Let $x_0,y_0\in\Omega$. We will treat the following cases
\begin{equation}\label{cases}
\frac{|x_0-y_0|}{\mathrm{dist}(y_0,\partial\Omega)}\le \frac{4\mathrm{diam}(\Omega)}{\rho},\quad \frac{|x_0-y_0|}{\mathrm{dist}(y_0,\partial\Omega)}\ge \frac{4\mathrm{diam}(\Omega)}{\rho}
\end{equation}
separately.

We begin with the first case in \eqref{cases}, which is equivalent to that
\begin{equation}\label{case1}
\frac{\rho}{4\mathrm{diam}(\Omega)}|x_0-y_0|\le \mathrm{dist}(y_0,\partial\Omega).
\end{equation}
Denote $R=\rho|x_0-y_0|/8\mathrm{diam}$. We claim that
\begin{equation}\label{es1}
B(y_0,2R)\subset \Omega,\quad \mathrm{dist}(x_0,B(y_0,2R))> \frac{15}{16}|x_0-y_0|>0.
\end{equation}
Notice that the  inclusion in \eqref{es1} follows from that, if $y\in B(y_0,2R)$, then
\begin{align*}
|y-y_0|< 2R= \frac{\rho}{4\mathrm{diam}(\Omega)}|x_0-y_0|\le \mathrm{dist}(y_0,\partial\Omega)
\end{align*}
by using \eqref{case1}. Furthermore, by using estimate $\rho/\mathrm{diam}(\Omega)< 1/4$, we get
\begin{align*}
|x_0-y|&\ge |x_0-y_0|-|y_0-y|\\&> |x_0-y_0|-2R=
|x_0-y_0|-\frac{\rho}{4\mathrm{diam}(\Omega)}|x_0-y_0|\\&> |x_0-y_0|-\frac{1}{16}|x_0-y_0|= \frac{15}{16}|x_0-y_0|.
\end{align*}
The inequality in \eqref{es1} follows by infimizing the left-hand side over $y\in B(y_0,2R)$.

Notice that
$D:=B(y_0,2R)\subset\Omega\setminus\{x_0\}$ by \eqref{es1}.
By using  \eqref{impreg}, we find that the function $u=G(x_0,\cdot)$ satisfies
$Lu=0$ in $D$ and $|u(y)|\le C\Gamma_\Omega(x_0,y)$ for $y\in D$. Without loss of generality, we can assume that $D\subset \R^n_+$.
Hence, by using Lemma \ref{boupar} with $D\subset \R^n_+$ and $T=\emptyset$, we obtain the important estimate
 \begin{equation}\label{simmsa}
 \begin{split}
|u|^*_{2,D}  + [u]^*_{2,\alpha;D}&= |u|^*_{2,\alpha;D}\le C|u|_{0,D}= C \sup_{y\in D} |u(y)|\\&=C \sup_{y\in D} |G(x_0,y)|\le 
C \sup_{y\in D} \Gamma_\Omega(x_0,y_0)\le C\Gamma_\Omega(x_0,y_0).
\end{split}
 \end{equation}
  It remains to collect the implications of this strong estimate.
 The first consequence of \eqref{simmsa} is that, if $y\in B(y_0,R)\subset D$ and $|\beta|\le 2$, we have
 \begin{equation}\label{sas}
 \begin{split}
{\bar d}_y^{|\beta|}|\partial^\beta_y G(x_0,y)|=\bar \delta_y^{|\beta|}|\partial^\beta u(y)|
&\le\sup_{z\in D;|\gamma|=|\beta|} \bar \delta_z^{|\beta|} |\partial^\gamma u(z)| = [u]^*_{|\beta|;D}
\\& \le \sum_{j=0}^2 [u]^*_{j;D}=|u|^*_{2;D}\le C\Gamma_\Omega(x_0,y_0).
\end{split}
 \end{equation}
By the inclusion in \eqref{es1}, we have $\bar \delta_y=\mathrm{dist}(y,\partial D)\ge R\ge C|x_0-y_0|$ given that  $y\in B(y_0,R)$.
Hence  estimate \eqref{sas} implies  that
\[
|\partial^\beta_y G(x_0,y)|\le C\Gamma_\Omega(x_0,y_0)|x_0-y_0|^{-|\beta|},\quad y\in B(y_0,R).
\]
In the special case $y=y_0$ this implies the required estimate \eqref{redsize}.

Next assume that $|\beta|=2$ and $y_0+h\in B(y_0,R)\subset D$. Then,
by \eqref{simmsa}, we have
\begin{align*}
&\min\{\bar \delta_{y_0},\bar \delta_{y_0+h}\}^{2+\alpha}\frac{|\partial^\beta_y G(x_0,y_0+h)-\partial^\beta_y G(x_0,y_0)|}{|h|^\alpha}\\
&=\min\{\bar \delta_{y_0},\bar \delta_{y_0+h}\}^{2+\alpha}\frac{|\partial^\beta u(y_0+h)-\partial^\beta u(y_0)|}{|h|^\alpha}\\
&\le \sup_{z,w\in D;|\beta|=2}\bigg[\min\{\bar \delta_{z},\bar \delta_{w}\}^{2+\alpha}\frac{|\partial^\beta u(z)-\partial^\beta u(w)|}{|z-w|^\alpha}\bigg]=[u]^*_{2,\alpha;D}\le C\Gamma_\Omega(x_0,y_0).
\end{align*}
As above, by using \eqref{es1}, we have the estimate
\[
\min\{\bar \delta_{y_0},\bar \delta_{y_0+h}\}\ge R =C|x_0-y_0|
\]
since $y_0,y_0+h\in B(y_0,R)$.
As a consequence, we obtain the estimate
\[
|\partial^\beta_y G(x_0,y_0+h)-\partial^\beta G(x_0,y_0)|\le C|h|^\alpha \Gamma_\Omega(x_0,y_0)|x_0-y_0|^{-2-\alpha},
\]
 which clearly suffices
for \eqref{redhol}. This concludes
the first case in \eqref{cases}.



Next we proceed to the second case
in \eqref{cases}. This is a boundary estimate, where we assume that
\begin{equation}\label{case2}
\frac{\rho}{4\mathrm{diam}(\Omega)}|x_0-y_0|\ge \mathrm{dist}(y_0,\partial\Omega)=\delta(y_0).
\end{equation}
Denote $S=\rho|x_0-y_0|/\mathrm{diam}(\Omega)$
and fix a point $\bar y\in\partial\Omega$ such
that $|\bar y-y_0|=\delta(y_0)$. 
We begin by claiming that the following auxiliary estimates
\begin{equation}\label{distok}
\mathrm{dist}(x_0,B(\bar y,S))\ge \frac{1}{2}|x_0-y_0|,\quad 
\mathrm{dist}(B(y_0,\delta(y_0)),\R^n\setminus B(\bar y,S)) \ge S/2.
\end{equation}
hold true. Indeed, the first estimate \eqref{distok}
follows from that, if $z\in B(\bar y,S)$, then
\begin{align*}
|x_0-z|&\ge |x_0-y_0|-|y_0-\bar y|-|\bar y-z|\\&\ge |x_0-y_0| - \delta(y_0) -S
\ge |x_0-y_0|-\frac{1}{2}|x_0-y_0|\ge \frac{1}{2}|x_0-y_0|
\end{align*}
because $2\rho/\mathrm{diam}(\Omega)<1/2$, so that $\delta(y_0)+S\le 2\rho|x_0-y_0|/\mathrm{diam}(\Omega)\le |x_0-y_0|/2$.
For the second estimate in  \eqref{distok}, we fix $w\in B(y_0,\delta(y_0))$. Then
\[|w-\bar y|\le |w-y_0|+|y_0-\bar y|\le 2\delta(y_0)\le S/2.\]
If also $z\in \R^n\setminus B(\bar y,S)$, then
\[
|z-w|=|z-\bar y+\bar y-w|\ge |z-\bar y|-|\bar y-w|\ge  S/2.
\]
It remains to infimize the  left-hand side over
 $z$ and $w$.

Later we will invoke Lemma \ref{elles}. For this purpose, we
denote $B=B(\bar y,S)\cap \Omega$ and $T=B(\bar y,S)\cap \partial\Omega\subsetneq \partial B$.
 Notice that, by \eqref{distok}, \[B(y_0,\delta(y_0))=B(y_0,\delta(y_0))\cap \Omega\subset B.\]
Denote  $\bar \delta_y=\mathrm{dist}(y,\partial B\setminus T)$. Then
$\partial B\setminus T\subset \R^n\setminus B(\bar y,S)$ so that, for  $y\in B(y_0,\delta(y_0))$, we have
\begin{equation}\label{dest}
\begin{split}
\bar \delta_y\ge\mathrm{dist}(B(y_0,\delta(y_0)),\partial B\setminus T)&\ge \mathrm{dist}(B(y_0,\delta(y_0)),\R^n\setminus B(\bar y,S))\\&\ge S/2
= C |x_0-y_0|.
\end{split}
 \end{equation}
 by \eqref{distok}.
The function $u=G(x_0,\cdot)$ satisfies $Lu=0$ in $B\subset\subset \bar \Omega\setminus\{x_0\}$, which is seen by using both
\eqref{distok} and \eqref{impreg}. It also satisfies the boundary
 condition $u\equiv 0$ on $T$. As a consequence of Lemma \ref{elles}
 and estimate $|u(y)|\le C_L|x_0-y|^{2-n}$ combined with \eqref{distok}, we have
 \begin{equation}\label{simms}
 \begin{split}
|u|^*_{2,B\cup T}  + [u]^*_{2,\alpha;B\cup T}&\le C|u|_{0,B}\le C  \sup_{y\in B} \Gamma_\Omega(x_0,y)\le C \Gamma_\Omega(x_0,y_0).
\end{split}
 \end{equation}
 The first  consequence of \eqref{simms} is that, if $y\in B(y_0,\delta(y_0))\subset B$ and $|\beta|\le 2$, we have
 \begin{align*}
{\bar d}_y^{|\beta|}|\partial^\beta_y G(x_0,y)|&=\bar \delta_y^{|\beta|}|\partial^\beta u(y)|
\le |u|^*_{2;B\cup T}\le C\Gamma(x_0,y_0).
 \end{align*}
 Taking estimate \eqref{dest} into account, we have
\[
|\partial^\beta_y G(x_0,y)|\le C\Gamma(x_0,y_0)|x_0-y_0|^{-|\beta|},\quad y\in B(y_0,\delta(y_0)).
\]
In the special case $y=y_0$ this yields \eqref{redsize}.

Next, if $y_0+h\in B(y_0,\delta(y_0))\subset B$ and $|\beta|=2$, then
by using \eqref{simms} we have
\begin{align*}
&\min\{\bar \delta_{y_0},\bar \delta_{y_0+h}\}^{2+\alpha}\frac{|\partial^\beta_y G(x_0,y_0+h)-\partial^\beta_y G(x_0,y_0)|}{|h|^\alpha}\le [u]^*_{2,\alpha;B\cup T}\le C\Gamma(x_0,y_0).
\end{align*}
On the other hand, by using \eqref{dest}, we have 
$\min\{\bar \delta_{y_0},\bar \delta_{y_0+h}\}\ge C|x_0-y_0|$.
As a consequence, we find that
\[
|\partial^\beta_y G(x_0,y_0+h)-\partial^\beta G(x_0,y_0)|\le C |h|^\alpha \Gamma_\Omega(x_0,y_0)|x_0-y_0|^{-2-\alpha},
\]
which clearly suffices
for \eqref{redhol}.
\end{proof}

To conclude the proof of Theorem \ref{dss} we still need the global H\"older estimate
\eqref{hol} where, in contrast to Lemma \ref{gest}, point $y+h$ can not be restricted
to the ball 
$B(y,\delta(y)\wedge \rho|x-y|/8\mathrm{diam}(\Omega))$. Proof proceeds as in \cite[p. 78]{t1dom} with minor modifications, and we omit the details that are based on uniformity.

\subsection{Proof of Theorem \ref{mainth}}
The main step is to show that $G\in\mathrm{K}^{-2}_{\Omega}(\alpha)$.
By using \eqref{redsize} we see that 
the first required estimate \eqref{k1} holds true, that is, 
\[|G(x,y)|\le C_G|x-y|^{2-n}\]
if $x,y\in\Omega$ are distinct points.
Next we verify \eqref{k2}. For this purpose we
fix $x\in\Omega$ and let $B=B(y^B,r)\subset\subset\Omega$ be a ball which satisfies
the condition 
\[
\frac{8r\mathrm{diam}(\Omega)}{\rho}\le |x-y^B|.
\]
Then, in particular, we have \[B(y^B,r)\subset B(y^B,\mathrm{dist}(y^B,\partial\Omega)\wedge \rho|x-y^B|/8\mathrm{diam}(\Omega)).\]
Hence, if $\{y,\ldots,y+3h\}\subset B\subset\subset\Omega$, using an integral representation
\cite[p.102]{t1dom}
for the second order differences, we find that
\begin{align*}
|\Delta_h^{3}(G(x,\cdot),B,y)| &\lesssim |h|^2 \sup_{\theta\in [0,2];|\alpha|=2}
\bigg\{|\partial^\alpha_y G(x,y+(1+\theta)h)-\partial^\alpha_y G(x,y+\theta h)|\bigg\}.
\end{align*}
Notice that $y+\theta h,y+(1+\theta)h,y^B\in B$ inside the supremum so that, by invoking Lemma \ref{gest}, we find that
\[
|\Delta_h^{3}(G(x,\cdot),B,y)|\lesssim |h|^{2}\mathrm{diam}(B)^{\alpha}|x-y^B|^{-n-\delta}\le \mathrm{diam}(B)^{2+\alpha}|x-y^B|^{-n-\alpha}.
\]
Integrating this inequality over $y\in B$ shows that $G$ satisfies
the  estimate \eqref{k2}. Because $G$ is symmetric, we also find
that the kernel $(x,y)\mapsto G(y,x)=G(x,y)$ also satisfies this condition. 

All in all,
we have shown that $G\in\mathrm{K}^{-2}_{\Omega}(\alpha)$. Also, by Theorem \ref{smoothun},
the bounded $C^{2,\alpha}$ domain $\Omega$ is uniform.
Hence 
Theorem \ref{fixss} applies and,
as a consequence, we have the following.

\begin{thm}\label{firstes}
Let $\Omega\subset\R^n$, $n\ge 3$, be a bounded $C^{2,\alpha}$ domain
and assume that the coefficients $a^{ij}$ of $L$ belong to $C^{1,\alpha}(\overline{\Omega})$.
Then Green's function ${G}$ of $L$ in $\Omega$ is a smooth kernel, that is,
$G\in\mathcal{K}^{-2}_\Omega(\alpha)$.
\end{thm}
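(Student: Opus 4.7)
The plan is to recognize that Theorem \ref{firstes} is the culmination of the work already sketched in the paragraphs immediately preceding it, and the remaining task is simply to assemble two pieces: (i) confirm that $G$ belongs to the standard kernel class $\mathrm{K}^{-2}_{\Omega}(\alpha)$, and (ii) promote this to membership in the smooth kernel class $\mathcal{K}^{-2}_\Omega(\alpha)$ by invoking the equivalence of the two classes on uniform domains (Theorem \ref{fixss}).

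First I would verify the size bound \eqref{k1}. This is immediate from the $|\beta|=0$ case of Theorem \ref{dss}, which gives $|G(x,y)|\le C|x-y|^{2-n}$ for all distinct $x,y\in\Omega$.

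Next I would verify the semilocal integral estimate \eqref{k2}. Fix $x\in\Omega$ and a ball $B=B(y^B,r)\subset\subset\Omega$ with $C_G r\le|x-y^B|$, where $C_G$ is chosen large enough (e.g.\ $C_G=8\mathrm{diam}(\Omega)/\rho$) so that the inclusion $B\subset B(y^B,\delta(y^B)\wedge \rho|x-y^B|/8\mathrm{diam}(\Omega))$ holds, matching the hypothesis of \eqref{hol} in Theorem \ref{dss}. For $|h|\le\mathrm{diam}(B)$ with $\{y,y+h,y+2h,y+3h\}\subset B$, apply the integral representation of the third-order difference operator in terms of second derivatives,
\[
\Delta_h^3(G(x,\cdot),B,y) = \sum_{|\alpha|=2}\int (\cdots)\,\bigl[\partial^\alpha_y G(x,y+(1+\theta)h)-\partial^\alpha_y G(x,y+\theta h)\bigr]\,d\theta,
\]
and invoke the Hölder estimate \eqref{hol} from Theorem \ref{dss} (applicable because the points lie inside $B$, where \eqref{hol} is valid relative to the reference point $y^B$) to obtain
\[
|\Delta_h^3(G(x,\cdot),B,y)|\le C|h|^{2+\alpha}|x-y^B|^{-n-\alpha}\le C\mathrm{diam}(B)^{2+\alpha}|x-y^B|^{-n-\alpha}.
\]
Integrating over $y\in B$ and dividing by $|B|^{1+(2+\alpha)/n}$ yields \eqref{k2}. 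The transposed condition follows at once from the symmetry $G(x,y)=G(y,x)$ provided by Theorem \ref{gext}. This establishes $G\in\mathrm{K}^{-2}_\Omega(\alpha)$.

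Finally, Theorem \ref{smoothun} tells us that a bounded $C^{2,\alpha}$ domain is uniform, so Theorem \ref{fixss} applies with $m=2$ and $\delta=\alpha$, identifying $\mathrm{K}^{-2}_\Omega(\alpha)=\mathcal{K}^{-2}_\Omega(\alpha)$. Thus $G\in\mathcal{K}^{-2}_\Omega(\alpha)$, as claimed. The only subtlety in this plan is the bookkeeping around the ball-size restriction ensuring that \eqref{hol} is applicable to points in $B$; the remainder is an invocation of machinery already built in the paper, so I do not anticipate a serious obstacle beyond that compatibility check.
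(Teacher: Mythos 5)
Your proposal is correct and follows essentially the same route as the paper: verify the size bound \eqref{k1} from the regularity theorem, verify \eqref{k2} via the integral representation of the third-order difference combined with the H\"older estimate on small balls well inside $\Omega$ and away from $x$, use symmetry for the transpose, and then invoke Theorem \ref{smoothun} (uniformity) together with Theorem \ref{fixss} to upgrade from $\mathrm{K}^{-2}_\Omega(\alpha)$ to $\mathcal{K}^{-2}_\Omega(\alpha)$. The only cosmetic difference is that you cite the global H\"older bound \eqref{hol} from Theorem \ref{dss} where the paper cites the local version \eqref{redhol} of Lemma \ref{gest}; in the restricted ball $B$ both apply and the necessary bookkeeping (comparing $|x-(y+\theta h)|$ with $|x-y^B|$ and checking $|h|\le|x-(y+\theta h)|/2$) is the same in either case, which you correctly flag.
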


By combining
theorems \ref{firstes} and \ref{extensionB}, we
see that there exists a globally defined smooth kernel
$\tilde G\in\mathcal{K}^{-2}_{\R^n}(\alpha)=\mathrm{K}^{-2}_{\R^n}(\alpha)$ such that
\[\tilde G|\Omega\times\Omega\setminus\{(x,x)\}=G.\] Next we invoke Theorem \ref{smoothun} for the conclusion that
$\Omega$ is $c$-coplump for some $c\ge 1$. Also, since $\Omega$ is bounded,
we have $\mathrm{diam}(\R^n\setminus\Omega)=\infty$. 
Furthermore, the operators $\partial^\sigma\mathcal{G}=\partial^\sigma \mathcal{G}^*$, $|\sigma|=2$,
are bounded on $L^2(\Omega)$ \cite[Theorem 8.12]{gilbarg}. Hence the assumptions
of Theorem \ref{korits} hold true and, by invoking it, we finish the proof
Theorem \ref{mainth}.

\section*{Acknowledgements}

Author thanks Prof. Kari Astala, Jan Cristina, and Dr. Aleksi V\"ah\"akangas for encouraging discussions.


\begin{thebibliography}{agmon}


\bibitem[AKSZ07]{aikawa}
\textsc{Aikawa, H.}, and \textsc{T. Kilpel\"ainen}, and \textsc{N. Shanmugalingam}, and \textsc{X. Zhong}:
{{B}oundary harnack principle for p-harmonic functions in smooth Euclidean domains}.
-
{Potential Anal.} {26}, no. 3, 2007,
{281--301}.


\bibitem[DS04]{sweers}
\textsc{Dall'Acqua, A.}, and \textsc{G. Sweers}:
{{E}stimates for {G}reen function and {P}oisson kernels of higher order {D}irichlet boundary value problems}.
-
{J. Differential Equations 205}, 2004, 466--487.

\bibitem[DJ84]{D-J}
\textsc{David, G.}, and \textsc{J.-L. Journ\'e}:
{{A} boundedness criterion for generalized {C}alder\'on--{Z}ygmund operators}.
-
{Ann. of Math.} {120}, 1984,
 {371--397}.


\bibitem[Fas98]{fassihi}
\textsc{Fassihi, M.}:
{$L^p$-integrability of the second order derivatives of {G}reen potentials in convex domains}.
-
{Licentitate thesis}, {G\"oteborg university}, 1998.



\bibitem[Fro93]{fromm}
\textsc{Fromm, S. J.}:
{{P}otential space estimates for {G}reen potentials in convex domains}.
-
{Proc. Amer. Math. Soc.} {119}, no. 1, 1993,
{225--233}.

 
\bibitem[GT83]{gilbarg}
\textsc{Gilbarg, D.}, and \textsc{N. S. Trudinger}:
{{E}lliptic partial differential equations of second order}.
-
{Grundlehren Math. Wiss.} {224}, Springer, 1983.



\bibitem[GW82]{gunther}
\textsc{G\"unther, M.}, and \textsc{K.-O. Widman}:
{{T}he {G}reen function for uniformly elliptic equations}.
-
{Manuscripta Math.} {37}, 1982,
{303--342}.


\bibitem[JK95]{jerison}
\textsc{Jerison, D.}, and \textsc{C. E. Kenig}:
{{T}he inhomogeneous {D}irichlet problem in {L}ipschitz domains}.
-
{J. Funct. Anal.} {130}, 1995,
{161--219}.

\bibitem[Kra67]{krasovskii}
\textsc{Krasovski\u{i}, J. P.}:
{{I}solation of singularities of the Green's's function}.
-
{Math. USSR -- Izv. I}, no. 5, 1967,
{935--966}.



\bibitem[V\"ah09]{t1dom}
\textsc{V\"ah\"akangas, A. V.}:
{{B}oundedness of weakly singular integral operators on domains}.
-
{Ann. Acad. Sci. Fenn. Math. Diss.} {No. 153}, 2009.



\bibitem[V\"ai88]{vaisala}
\textsc{V\"ais\"al\"a, J.}:
{{U}niform domains}.
-
{Tohoku Math. J.} {40}, no. {1},  1988,
{101--118}.


\end{thebibliography}
\end{document}